\newtheorem{theorem}{Theorem}[section]
\newtheorem{proposition}[theorem]{Proposition}
\newtheorem{lemma}[theorem]{Lemma}
\newtheorem{corry}[theorem]{Corollary}
\newtheorem{defi}[theorem]{Definition}
\theoremstyle{definition}
\newtheorem{example}[theorem]{Example}
\theoremstyle{definition}
\newtheorem{rem}[theorem]{Remark}
\numberwithin{claim}{theorem}
\renewenvironment{proof}{\textit{Proof.}}{\hfill\ensuremath{\qed}}
\def \qed{\hfill{\hbox{$\square$}}}
\renewenvironment{proof}{\textit{Proof.}}{\hfill\ensuremath{\qed}}
\numberwithin{equation}{section}
\begin{document}

\title{On  Time-Like Class  A Surfaces of a Warped Space-time with Lorentzian Fiber}

\author[F. Kaya]{Furkan Kaya}
\address{Department of Mathematics, Faculty of Science and Letters, Istanbul Technical University, Istanbul, T{\"u}rkiye}
\email{kayaf18@itu.edu.tr}

\author[N. Cenk Turgay]{Nurettin Cenk Turgay}
\address{Department of Mathematics, Faculty of Science and Letters, Istanbul Technical University, Istanbul, T{\"u}rkiye}
\email{turgayn@itu.edu.tr}

\subjclass[2010]{53C42 (Primary), 53A10}
\keywords{Warped product spaces, static space-times, class $\mathcal{A}$ surfaces, time-like submanifolds}

\begin{abstract}
In this paper, we consider time-like surfaces in the static space-time given by the warped product $\mathbb L^3_1(c)\, _f\times (I,dz^2)$, where $\mathbb L^3_1(c)$ denotes the Lorentzian space form with the constant sectional curvature $c\in\{-1,0,1\}$. In particular, we study the surfaces with light-like  $\left(\frac{\partial}{\partial z}\right)^T$. First, we construct a globally defined pseudo-orthonormal frame field on a surface satisfying this condition and deal with the invariants associated with this frame field. Then, we obtain a complete classification theorem for class~$\mathcal A$ surfaces. Finally, we consider some applications of this theorem.

\noindent\textbf{Keywords:} Warped space-time, anisotropic fluid, class $\mathcal{A}$ surfaces, time-like submanifolds.

\noindent\textbf{2020 Mathematics Subject Classification:} Primary 53C42; Secondary 53A10.

\end{abstract}

\maketitle

\section{Introduction}
Let $M$ be a (semi-) Riemannian submanifold of $\bar{M}\times I$, where $I$ is an open interval and $M^m$ is a (se\-mi-) Riemannian manifold and $R^{n}(c)$ denote the $n$-dimensional Riemannian space form of constant sectional curvature $c$ with the metric tensor $g^{c}$. By considering the decomposition
\begin{equation}\label{Decompofpartialw}
\frac{\partial}{\partial w} = T + \eta,
\end{equation}
one can define a vector field $T$ tangent to and a vector field $\eta$ normal to $M$, where  $\frac{\partial}{\partial w}$ denotes the unit vector field tangent to the interval $I$.

Several recent works investigate submanifolds of warped product spaces, which play a significant role in modeling various physical space-times, \cite {Aliasetal2025arxiv,DemirciTurgaySen2025,DekimpeJVanderVeken2020,TurgaySen2025}. For example, Dekimpe and Van der Veken \cite{DekimpeJVanderVeken2020} provide an overview of the geometric properties of marginally trapped surfaces in Robertson--Walker space-times. Recently, Alias \textit{et al.} investigate codimension-two space-like submanifolds within the null hypersurfaces of generalized Robertson--Walker space-times in \cite{Aliasetal2025arxiv} and some geometrical properties of class~$\mathcal A$  surfaces of Robertson--Walker space-times are obtained in \cite{DemirciTurgaySen2025, TurgaySen2025}.

Based on the decomposition \eqref{Decompofpartialw}, the natural question of ``\textit{how the geometric constraints imposed on $T$ and $\eta$ determine the submanifold itself}'' arises. The first systematic studies in this direction were carried out in \cite{MendonTojeiro2014,Tojeiro2010}, where, in particular, the notion of class~$\mathcal A$ immersions was introduced as follows:
\begin{defi}\cite{Tojeiro2010}
Let $\phi: M^m \to  R^{n}(c)\times I$ be an isometric immersion from a Riemannian manifold $M$. The immersion $\phi$ is said to be of \emph{class~$\mathcal A$ } if the tangent component $T$ is an eigenvector of the shape operator at each point of $M$.
\end{defi}

It turns out that class~$\mathcal A$ submanifolds have some connections with several well-studied submanifolds of product spaces. For example, a biconservative submanifold is necessarily class~$\mathcal A$ under some conditions as obtained by Manfio et al. in \cite{Manfioetall2019}, where the authors study biconservative submanifolds of $R^{4}(c) \times \mathbb{R}$. Furthermore, the link between class~$\mathcal A$ submanifolds and those with constant principal curvatures is explored in a recent study in \cite{Manfioetall2025}, which focuses on hypersurfaces in $R^{3}(c) \times \mathbb{R}$. 

On the other hand, a classical and fundamental example of a warped product space-time is the Robertson--Walker model $I^1_1 \times_f R^{3}(c)$, which is defined on the product manifold $I \times R^{3}(c)$ with the metric tensor
\[
-dt^{2} + f(t)^{2} g^{c},
\]
where $f: I \to (0,\infty)$ is a smooth, non-vanishing, warping function depending only on the time-like coordinate $t$, \cite{Robertson1935,Walker1937}. In this construction, the vector field tangent to the base $I$ is time-like, and thus the Robertson--Walker space-time describes a time-oriented warped product in which the warping function evolves in the temporal direction. In contrast,  Dobarro and Ünal studied the geometric properties of standard static space-times, focusing on the characterization of their warped product structure, geodesic completeness, and curvature properties in \cite{DobarroUnal2004}. In this paper, based on their work, we study submanifolds of a space-time defined on the product manifold $\mathbb L^{3}_{1}(c) \times I$, equipped with the warped product metric
\begin{equation}\label{L31cfxIMetricTensor}
\widetilde  g=f(z)^{2} g_{c} + dz^{2},
\end{equation}
where  $\mathbb L^{n}_{1}(c)$ denotes the $n$-dimensional Lorentzian space form with sectional curvature $c$ equipped with the metric tensor $g_{c}$ and the warping function $f: I \to (0,\infty)$ depends on a spatial coordinate $z$.

Unlike Robertson--Walker spacetimes, where the warping function depends on the time coordinate, the warped space-time considered in this paper is obtained by warping a three-dimensional Lorentzian space form along the spatial direction $z$. Consequently, the resulting geometry is generally not an Einstein manifold unless the warping function satisfies a particular differential equation (cf.~\cite{ONeill1982,DobarroUnal2005}). Nevertheless, according to the standard interpretation of Einstein's field equations, the Einstein tensor may be regarded as an effective stress--energy tensor. This provides a natural physical interpretation of the underlying geometry as an effective anisotropic matter distribution, a viewpoint frequently adopted in the study of warped space-times and relativistic matter models (see, e.g., \cite{Visser1995,HerreraSantos1997}):

\begin{rem} For an arbitrary smooth function $f$ with the condition
$$ff''\neq f'^2-c,$$
 the metric $\widetilde  g$ does not satisfy the vacuum Einstein equations. However, by interpreting the Einstein tensor as an effective stress-energy tensor, the corresponding matter source may be regarded as an anisotropic fluid. In this case, the geometry of the space-time determine the stress energy tensor $T$ of Hawking–Ellis Type I with the form
$$T^\mu_{\ \nu}=\operatorname{diag}(-\rho,p_\perp,p_\perp,p_\parallel),$$
where the principal pressures satisfy
$$p_x=p_y=p_\perp\neq p_\|=p_z. $$
\end{rem}

On the other hand, the structural difference between the two models has important geometric implications.  In particular, from the viewpoint of the theory of \textit{non-degenerated} submanifolds, the space-like base in the static space-time  $\mathbb L^3_1(c)\,{}_f\times I$ allows for different geometric behaviors compared to the time-like base of Robertson--Walker space-times. One of the main differences is the causality of the vector fields $T$ and $\eta$ defined by the decomposition \eqref{Decompofpartialw}. Unlike the Robertson--Walker space-times, where $T$ and $\eta$ have opposite causal character---that is, one of them is space-like while the other is time-like---in the space-time $\mathbb{L}^3_1(c)\,{}_f\times I$, either $T$ or $\eta$ can be light-like.

In this direction, we investigate surfaces in the static space-time $\mathbb L^3_1(c)\,{}_f\times I$ with light-like $\left(\frac{\partial}{\partial z}\right)^T$. Section~2 recalls the fundamental notions of submanifold theory and fixes the notation that will be used throughout the paper. In Section~3, after constructing explicit examples of surfaces with light-like $\left(\frac{\partial}{\partial z}\right)^T$, we derive a characterization of surfaces with this property. Section~4 contains the main result, \Cref{L31cfI-ClassA-MainThm}, which provides a complete local classification of class~$\mathcal A$ surfaces in $\mathbb L^3_1(c)\,{}_f\times I$. Finally, in Section~5, we present applications of \Cref{L31cfI-ClassA-MainThm}, yielding all pseudo-umbilical and totally umbilical surfaces with light-like $\tfrac{\partial}{\partial z}$.


\section{Basic Concepts and Notation}
This section presents the basic notation used throughout the paper, along with a short summary of the theory of submanifolds of semi-Riemannian manifolds.

Let $\mathbb E^{n}_r$ denote the $n$-dimensional semi-Euclidean space with the index $r$ given by the metric tensor
$$g_{0,r}=-\sum\limits_{i=1}^rdx_i^2+\sum\limits_{i=r+1}^ndx_i^2,$$
where $(x_1,x_2,\hdots,x_n)$ is a rectangular coordinate system in $\mathbb R^n$ and we put $(x_1,x_2,x_3,x_4)=(t,x,y,z)$ if $n=4$.   

When $n>2$, $\mathbb L^n_1(c)$ stands for the $n$ dimensional Lorentzian space-form with the  constant sectional curvature $c$,  i.e.,
$$\mathbb L^n_1(c)=\left\{\begin{array}{cc}
\mathbb S^{n}_1&\mbox{if }c=1,\\
\mathbb E^{n}_1&\mbox{if }c=0,\\
\mathbb H^{n}_1&\mbox{if }c=-1
\end{array}\right.,\quad$$
and $g_c$ stands for its metric tensor, where $\mathbb S^n_1(K_0)$ and  $\mathbb H^n_1(K_0)$ will stand for $n$-dimensional de Sitter space and anti de-Sitter spaces, respectively, with the  sectional curvature $K_0$ and we put $\mathbb S^n_1(1)=\mathbb S^n_1$ and  $\mathbb H^n_1(-1)=\mathbb H^n_1$.

\subsection{The Static Space-Time $\mathbb L^3_1(c)\,{}_f\times I$}
In this subsection, we are going to consider the static space-time given by the warped product $\mathbb L^3_1(c)\,{}_f\times I$ with the Levi-Civita connection
$\widetilde\nabla$ and metric tensor $\widetilde  g=\langle\cdot,\cdot\rangle$ defined by \eqref{L31cfxIMetricTensor} for a non-vanishing smooth function $f$.

Throughout this paper, $F$ will denote the function defined by
\begin{equation*}
F(z):=\int_{z_0}^z \frac{1}{f(\xi)}d\xi
\end{equation*}
for a $z_0\in I$. Moreover, for a given vector field $X$ 
tangent to $\mathbb L^3_1(c)\,{}_f\times I$, we define a vector field $\bar X$ tangent to $\mathbb L^3_1(c)$ and a function $X_4$ by
\begin{equation}\label{L31cxfDefX4andbarX}
 \bar X:=\Pi_*(X),\qquad X_4:=\langle X,\frac{\partial}{\partial z} \rangle,
\end{equation} 
where $\Pi:\mathbb L^3_1(c)\times I\to\mathbb L^3_1(c)$ is the canonical projection. Then, we write
$$X = (\bar{X}, X_4).$$
Occasionally, by a slight abuse of notation, we adopt the convention 
$$(\bar{X}, 0) = \bar{X}.$$

On the other hand, from \cite{ONeill1982} it is obtained that Levi-Civita connection $\widetilde\nabla$ of  $\mathbb L^3_1(c)\,{}_f\times I$ has the form
\begin{equation}\label{L31cxfILCconn}
\widetilde{\nabla}_X Y=\nabla^0_XY+\frac{f'}{f} \left(X_{4}\bar{Y}+Y_{4}\bar{X},-\langle\bar X,\bar Y\rangle\right),
\end{equation} 
where $\nabla^{0}$ denotes the Levi-Civita connection of the Cartesian product $\mathbb L^3_1(c)\times I$.

Now, let $M$ be a time-like surface in $\mathbb L^3_1(c)\,{}_f\times I$. Then, by letting $w=z$ in \eqref{Decompofpartialw}, a vector field $T$ tangent to $M$ and a vector field $\eta$ normal to $M$ are defined by 
\begin{equation}\label{partialz}
		\left.\frac{\partial}{\partial z}\right|_M=T+\eta.
\end{equation}
\begin{rem}\textbf{Assumptions.} 
We are going to exclude the trivial cases when $M$ contains an open part of a horizontal slice $\hat M\,{}_{f(z_0)}\times\{z_0\}$ or a vertical cylinder $M=\alpha\,{}_f\times I$ and also the case when it is contained in a totally geodesic hypersurface of $\mathbb L^3_1(c)\,{}_f\times I$, where $\hat M$ is a surface and $\alpha$ is a curve in the space form $\mathbb L^3_1(c)$. Therefore, throughout the paper, we shall assume that
\begin{itemize}
\item $T_p \neq 0$ and $\eta_p \neq 0$ at every point $p\in M$,
\item $M$ does not contain any open subset lying in a totally geodesic hypersurface of $\mathbb L^3_1(c)\,{}_f\times I$.
\end{itemize}
\end{rem}

\subsection{Basic Definitions and Facts in the Theory of Submanifolds}
Let $M^n$ be a semi-Riemannian submanifold of $(N,\hat g)$ with the Levi-Civita connection $\nabla$, the second fundamental form $h$, shape operator $A$ and normal connection $\nabla^\bot$. Then, for all vector fields $X$ and $Y$ tangent to $M$ and $\xi$ normal to $M$,  the Gauss and Weingarten formul\ae 
\begin{align*}
\begin{split}
\nonumber{\nabla}^N_X Y&=\nabla_X Y+h(X,Y),\\
\nonumber{\nabla}^N_X \zeta&=-A_\zeta X+\nabla^\perp_X \xi
\end{split}
\end{align*}
are satisfied, where $\nabla^N$ denotes the Levi-Civita connection of $N$. Note that $h$ and $A$ are related by
\begin{equation}\label{hArelby}
\hat g( h(X,Y),\xi)=\hat g( A_\xi X,Y).
\end{equation}
The mean curvature vector field $H$ of $M$ is defined by
$$H=\frac 1n \mathrm{tr\,} h.$$
$M$ is said to be umbilical along $\xi$ if there exists a function $a_\xi\in C^\infty (M)$ such that 
$$\hat g(A_\xi X,Y)=a_\xi\hat g(X,Y).$$
If $M$ is  umbilical along $H$, then it is called a pseudo-umbilical submanifold of $N$. Furthermore, if $M$ is umbilical along $\xi$ for every $\xi$ normal to $M$, then $M$ is said to be totally umbilical.

Let $\tilde R$ and $R$ stand for the curvature tensors of $M$ and $N$, respectively and $R^\perp$ denote the normal curvature tensor of $M$. Then the Codazzi, Gauss and Ricci equations take the form 
\begin{eqnarray}
\label{Codazzi} (\tilde R(X,Y)Z)^\perp&=& (\overline{\nabla}{_X} h)(Y,Z)-(\overline{\nabla}_Y h)(X,Z),\\
\label{GaussEq} R(X,Y)Z&=&A_{h(Y,Z)}X-A_{h(X,Z)}Y
\end{eqnarray}
and
\begin{equation}\label{Ricci}
(\tilde R(X,Y)\xi)^\perp=R^\perp (X,Y)\xi+h(A_\xi X,Y)-h(X,A_\xi Y),
\end{equation}
respectively, where the covariant derivative $(\overline{\nabla}_X h)(Y,Z)$ of the second fundamental form $h$  is defined by
\begin{align}
\nonumber(\overline{\nabla}_X h)(Y,Z)=\nabla_X ^\bot h(Y,Z)-h(\nabla_X Y,Z)-h(Y,\nabla_X Z).
\end{align}
$M$ is said to have flat normal bundle if $R^\perp=0$.


Now, let $\hat{M}$ be a time-like surface in the Lorentzian space form $\mathbb L^3_1(c)$, with metric tensor $\hat g$ and shape operator $\hat{S}$. Further, assume that $\hat{M}$ is not totally geodesic. Then, the Gaussian curvature $K$ of $\hat M$ is defined by
$$K=\frac{\langle R(X,Y)Y, X \rangle}{\langle X,X\rangle \langle Y,Y\rangle - \langle X,Y\rangle^2},$$
where $\{X,Y\}$ is a frame field for the tangent bundle of $\hat M$. $\hat M$ is said to be flat if $K$ vanishes identically on $\hat M$. In this case, for all $p\in M$ there exists a local coordinate system $\left(\mathcal N_p, \left(U,V\right)\right)$ such that $\mathcal N_p\ni p$ and
\begin{equation}\label{L31c-isothermal-null}
\hat g=-(dU dV+dV dU).
\end{equation}
We are going to use the following remark.
\begin{rem}\label{L31fxIFlatNBPropRem}
Let $\hat M$ be a time-like surface. Then, a direct computation yields that local coordinates $u, v$ satisfy  
\begin{equation}
\label{L31fxIFlatNBPropCaseEqIIMetric} \hat g=-\frac 1{f(u)^2}du^2+\frac {1}{f(u)}(du dv+dv du)
\end{equation}
and $\partial_v$ is proportional to $\partial_U$ if and only if $(u, v)$ and $\left(U,V\right)$ given in \eqref{L31c-isothermal-null} are related by
\begin{equation}
\label{L31fxIFlatNBPropRemEq1} U(u,v)=\frac{1}{2 c_1}F(u)-\frac{v}{c_1}+c_2,\qquad
V(u,v)=c_1F(u)
\end{equation}
for some constants $c_1\neq0,c_2$.
\end{rem}

It is well known that the matrix representation of $\hat{S}$, with respect to a suitable frame field, takes one of the forms
\begin{equation}\label{SubSectPrelL31c}
\mathrm{I.}\, \hat{S} = \begin{pmatrix} \lambda_1 & 0 \\ 0 & \lambda_2 \end{pmatrix}, \qquad
\mathrm{II.}\, \hat{S} = \begin{pmatrix} \lambda & \mu \\ -\mu & \lambda \end{pmatrix}, \qquad
\mathrm{III.}\, \hat{S} = \begin{pmatrix} \lambda & 1 \\ 0 & \lambda \end{pmatrix}
\end{equation}
for some smooth functions $\lambda$, $\lambda_i$, and $\mu \neq 0$. Note that in {case III}, the frame field is pseudo-orthonormal, whereas in the remaining cases, it is orthonormal.

We are going to use the following well-known result:
\begin{rem}\label{L31cRemarkTotallUmb} 
If $\hat{M}^2_1$ is a (totally) umbilical surface, then it is locally an open part of one of the following surfaces (see, for example, \cite{Martinez2023}):
\begin{enumerate}
\item [(1)] $c=1$ and $\hat M\subset \mathbb S^2_1(\frac 1{r^2})\subset \mathbb S^3_1$ is  parametrized by
\begin{equation}\label{S31TotUmbParam1}
\tilde\phi(s_1,s_2)=(r\sinh s_1 , r\cosh s_1 \cos s_2,r\cosh s_1 \sin s_2,\sqrt{1-r^2}), \qquad 0<r^2<1, 
\end{equation}
\item [(2)] $c=-1$ and $\hat M\subset \mathbb H^2_1(-\frac 1{r^2})\subset \mathbb H^3_1$ is parametrized by
\begin{equation}\label{H31TotUmbParam1}
\tilde\phi(s_1,s_2)=(r\cosh s_1 \cos s_2,r\cosh s_1 \sin s_2,r\sinh s_1 ,\sqrt{r^2-1}), \qquad r^2>1, 
\end{equation}
\item [(3)] $c=-1$ and $\hat M\subset \mathbb H^3_1$ is  the flat surface parametrized by
\begin{align}\label{H31TotUmbQuadraParam1}
\begin{split}
\tilde\phi(U,V)=&\left(\frac{U+V}{\sqrt{2}},a (2 U V-1)-\frac{1}{4 a},a (2 U V-1)+\frac{1}{4 a},\frac{U-V}{\sqrt{2}}\right), \qquad a\neq 0, 
\end{split}
\end{align}
\item [(4)] $c=0$ and $\hat M\subset \mathbb S^2_1(\frac 1{r^2})\subset \mathbb E^3_1$ is parametrized by
\begin{equation}\label{E31TotUmbParam1}
\tilde\phi(s_1,s_2)=(r\sinh s_1 , r\cosh s_1 \cos s_2,r\cosh s_1 \sin s_2).
\end{equation}
\end{enumerate}
\end{rem}



\subsection{Null Scrolls}
Let $\hat M$ be a Lorentzian surface in $\mathbb L^3_1(c)$. The shape operator $\hat S$ of $\hat M$ has the canonical form of Case~III in \eqref{SubSectPrelL31c} if and only if $M$ is a null scroll, given in the following example (see,\textit{ e.g.}, the proof of \cite[Theorem on p.~55]{JiHuaHou2007}).
\begin{example}\label{L31cNullScroll}\cite{JiHuaHou2007,Kimx2year2003}
Let $\alpha$ be a light-like curve in $\mathbb L^3_1(c)$ with a Cartan frame $\{A,B;C\}$ such that
\begin{equation}\label{L31cNullScrollABCalphasatisfies2}
\langle A,B\rangle=-1,\ \langle C,C\rangle=1,\quad \langle A,A\rangle=\langle B,B\rangle=\langle C,A\rangle=\langle C,B\rangle=0
\end{equation}
 along $\alpha$ satisfying
\begin{equation}\label{L31cNullScrollABCalphasatisfies}
\alpha'=A,\quad A'= aC,\quad B'= bC+c\alpha,\quad C'=bA+aB
\end{equation}
for some smooth functions $a$ and $b$. Then, the surface parametrized by
\begin{equation}\label{L31c-NullScrollParam}
\tilde\phi(U, V)=\alpha(U)+V B(U)
\end{equation}
is called a null scroll. Note that if the function $b$ is constant, then the surface parametrized by \eqref{L31c-NullScrollParam} is called $B$-scroll.
\end{example}

\begin{rem}\cite{JiHuaHou2007}\label{FlatMinimalB-scrolls}
Let $\hat M$ be a null scroll in $\mathbb L^3_1(c)$ given by the parametrization \eqref{L31c-NullScrollParam}. Then, the shape operator $\hat S$ of $\hat M$ along the unit normal vector field
$$\tilde N=-tbB-C,$$
is, \cite{JiHuaHou2007}, 
\begin{equation}\label{L31c-NullScrollParamShapOp}
\left(
\begin{array}{cc}
 b & a+t b' \\
 0 & b \\
\end{array}
\right)\qquad\mbox{with respect to $\{\partial_V,\partial_U\}$.}
\end{equation}
Therefore, a null scroll in $\mathbb L^3_1(0)=\mathbb E^3_1$ is flat (and equivalently minimal) if and only if it is a B-scroll with $b=0$. In this case, if $a\neq0$, then $\hat M$ is congruent to the surface given by
\begin{equation}\label{E31-FlatBScrollParam}
\tilde\phi(U, V)=\frac1{6 \sqrt{2}}\left(U^3+6 U+6 V,3 \sqrt{2} U^2,U^3-6 U+6 V\right).
\end{equation}
\end{rem}


We are going to use the following lemma.
\begin{lemma}\label{FlatNullScrollH31-Lemma}
A flat null scroll $\hat M$  in the anti de Sitter space $\mathbb H^3_1$ generated by $a=-\frac 1{k^2}$ and $b=1$ is congruent to the B-scroll parametrized by 
\begin{align}\label{FlatNullScrollH31-LemmaEq1}
\begin{split}
\tilde\phi(U,V)=&\left(\frac{\left(U-2 k^2 V\right) \cos \left(\frac{U}{k}\right)-2 k \sin \left(\frac{U}{k}\right)}{2 k},-\frac{\left(2 k^3+k\right) \cos \left(\frac{U}{k}\right)+\left(U-2 k^2 V\right) \sin \left(\frac{U}{k}\right)}{2 \sqrt{2} k^2},\right.\\
&\left.\frac{k \left(2 k^2-1\right) \cos \left(\frac{U}{k}\right)-\left(U-2 k^2 V\right) \sin \left(\frac{U}{k}\right)}{2 \sqrt{2} k^2},\frac{\left(U-2 k^2 V\right) \cos \left(\frac{U}{k}\right)}{2 k}\right).
\end{split}
\end{align}
Conversely, the surface given by \eqref{FlatNullScrollH31-LemmaEq1} in $\mathbb H^3_1$ is flat.
\end{lemma}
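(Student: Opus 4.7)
The plan is to integrate the Cartan frame ODE system explicitly for the prescribed values $a=-1/k^2$, $b=1$ (and $c=-1$ since we work in $\mathbb H^3_1$), identify the resulting null scroll with \eqref{FlatNullScrollH31-LemmaEq1} up to a congruence of $\mathbb H^3_1$, and verify the converse direction by direct computation.

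First, I would view $\mathbb H^3_1$ as a hyperquadric in the semi-Euclidean space $\mathbb E^4_2$ and treat the Cartan frame $\{\alpha,A,B,C\}$ along the base curve as four vector-valued maps into $\mathbb E^4_2$. Substituting the prescribed values into \eqref{L31cNullScrollABCalphasatisfies} yields the linear system with constant coefficients
$$\alpha'=A,\qquad A'=-\tfrac{1}{k^2}C,\qquad B'=C-\alpha,\qquad C'=A-\tfrac{1}{k^2}B.$$
Eliminating $A$ and $C$ in favor of $\alpha$ (noting $C=-k^2\alpha''$ from the second equation and $B=k^4\alpha'''+k^2\alpha'$ from the third) and differentiating once more produces the scalar fourth-order ODE
$$\alpha''''+\tfrac{2}{k^2}\alpha''+\tfrac{1}{k^4}\alpha=0,$$
whose characteristic polynomial $(\lambda^2+1/k^2)^2$ has the double imaginary roots $\pm i/k$. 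Hence each component of $\alpha(U)$ is a linear combination of $\cos(U/k)$, $\sin(U/k)$, $U\cos(U/k)$, $U\sin(U/k)$, which is precisely the function space visible in \eqref{FlatNullScrollH31-LemmaEq1}. The remaining frame vectors $A$, $B$, $C$ then admit such expansions as well.

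Next, I would fix the vector-valued integration constants by imposing the algebraic relations: the ambient embedding condition $\langle\alpha,\alpha\rangle=-1$ together with the Cartan frame identities \eqref{L31cNullScrollABCalphasatisfies2}. Modulo the action of the isometry group of $\mathbb H^3_1$ (linear isometries of $\mathbb E^4_2$ preserving the quadric), these quadratic constraints determine $\alpha$ and $B$ uniquely. A convenient choice of initial frame at $U=0$ and evaluation of $\tilde\phi(U,V)=\alpha(U)+VB(U)$ yields exactly the parametrization \eqref{FlatNullScrollH31-LemmaEq1}.

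For the converse I would verify directly that the map $\tilde\phi$ in \eqref{FlatNullScrollH31-LemmaEq1} satisfies $\langle\tilde\phi,\tilde\phi\rangle=-1$, that $\tilde\phi_V$ is null, and that the shape operator, computed from the second fundamental form, is of the form \eqref{L31c-NullScrollParamShapOp} with $a=-1/k^2$ and $b=1$. Flatness then follows from the Gauss equation, since the shape operator is upper triangular with constant diagonal equal to $b=1$ in a null frame and the ambient curvature is $c=-1$. The principal obstacle is combinatorial: tracking how the Cartan frame relations and ambient embedding condition cut the vector-valued integration constants down to a single isometry orbit requires careful bookkeeping. A cleaner alternative is to invoke the Bonnet-type uniqueness theorem for isometric immersions into space forms, so that once the converse verification is done, uniqueness reduces the forward direction to matching the induced fundamental forms.
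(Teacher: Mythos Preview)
Your proposal is correct and follows essentially the same route as the paper: integrate the constant-coefficient Cartan frame system, use the algebraic constraints \eqref{L31cNullScrollABCalphasatisfies2} together with $\langle\alpha,\alpha\rangle=-1$ to pin down the integration constants up to an isometry of $\mathbb H^3_1$, and then verify the converse by direct computation. The only cosmetic difference is that the paper solves the first-order system directly in terms of four constant vectors $v_1,\dots,v_4$ and then computes their mutual inner products, whereas you first eliminate to the single fourth-order equation $\alpha''''+\tfrac{2}{k^2}\alpha''+\tfrac{1}{k^4}\alpha=0$ (note your expression for $B$ actually comes from the fourth equation $C'=A-\tfrac{1}{k^2}B$, not the third); this is a harmless reorganization of the same computation.
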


\begin{proof}
By solving \eqref{L31cNullScrollABCalphasatisfies} for $c=-1$, $a=-\frac 1{k^2}$ and $b=1$, we get
\begin{align}\label{FlatNullScrollH31-LemmaProofEq1}
\begin{split}
\alpha(U)=&\frac{ \left(\sin \frac{U}{k}-\frac{U \cos \frac{U}{k}}{2 k}\right)}{k}v_1+\frac{ \left(-\frac{U \sin \frac{U}{k}}{2 k}-\frac{1}{2} \cos \frac{U}{k}\right)}{k}v_2+k \sin \frac{U}{k}v_3-k \cos \frac{U}{k} v_4,\\
A(U)=& \left(\frac{U \sin \frac{U}{k}}{2 k^3}+\frac{\sin \frac{U}{k} \sin \frac{2U}{k}}{4 k^2}+\frac{\cos ^3\frac{U}{k}}{2 k^2}\right)v_1+ \cos \frac{U}{k}v_3+ \sin \frac{U}{k}v_4\\&
+ \left(-\frac{U \cos \frac{U}{k}}{2 k^3}-\frac{\sin \frac{U}{k} \cos ^2\frac{U}{k}}{2 k^2}+\frac{\sin \frac{2U}{k} \cos \frac{U}{k}}{4 k^2}\right)v_2,\\
B(U)=& \cos \frac{U}{k}v_1+\sin \frac{U}{k}v_2, \\
C(U)=&-\frac{U \cos \frac{U}{k}}{2 k^2} v_1+\frac{ \left(k \cos \frac{U}{k}-U \sin \frac{U}{k}\right)}{2 k^2}v_2+k  \sin \frac{U}{k}v_3-k  \cos \frac{U}{k}v_4
\end{split}
\end{align}
for some constant vectors $v_i\in\mathbb E^4_1$. By a direct computation using \eqref{L31cNullScrollABCalphasatisfies2} and \eqref{FlatNullScrollH31-LemmaProofEq1}, we obtain
$$\langle v_1, v_{3} \rangle = -1, \qquad \langle v_3, v_{3} \rangle = \frac{1}{k^2}, \qquad \langle v_2, v_{4} \rangle = 1,$$
and $\langle v_i, v_{j} \rangle = 0$ for all other pairs. Therefore, up to a suitable isometry of $\mathbb H^3_1$, one can choose
$$v_1=(-k,0,0,-k),\ v_2=\left(0,\frac{1}{\sqrt{2}},\frac{1}{\sqrt{2}},0\right),\ v_3=\left(0,\frac{1}{\sqrt{2}},-\frac{1}{\sqrt{2}},0\right),\ v_4=\left(0,0,0,\frac{1}{k}\right)$$
from which, together with \eqref{FlatNullScrollH31-LemmaProofEq1}, we obtain \eqref{FlatNullScrollH31-LemmaEq1}.

Converse of the lemma follows from a direct computation.
\end{proof}


\section{Surfaces with Light-Like $T$}
In this section, we obtain local classifications of  time-like surfaces in $\mathbb L^3_1(c)\,{}_f\times I$ with light-like  $\left(\frac{\partial}{\partial z}\right)^T$.

Let $M$ be an oriented time-like surface in $\mathbb L^3_1(c)\,{}_f\times I$ and assume that the tangent vector field $T$ defined by 
\eqref{partialz} is light-like.  In this case, the decomposition \eqref{partialz} turns into 
\begin{equation}\label{partialzDecompT-LL}
    \left.\frac{\partial}{\partial z}\right|_M = T + e_3,
\end{equation}
where $e_3$ is a unit normal vector field. We are going to consider the pseudo-orthonormal frame field $\{T,U\}$ of the tangent bundle of $M$ such that 
$$\langle T,T\rangle=\langle U,U\rangle=0,\quad \langle T,U\rangle=-1$$
and an orthonormal frame field $\{e_3,e_4\}$ of the normal bundle of $M$.

\subsection{Examples of Surfaces}
In this subsection, we construct some examples of surfaces which satisfies certain geometrical properties.

First, we obtain the next proposition to present a surface which satisfies the property of having light-like $\left(\frac{\partial}{\partial z}\right)^T$.
\begin{proposition}\label{SubsectTisLLProp1}
Let $\tilde{\phi}(u,v)$ be a parametrization of a time-like surface $\hat M$ in $\mathbb L^{3}_{1}(c)$ with the induced metric
\begin{equation}
\label{SubsectTisLLProp1Eq2} \hat g=-\frac 1{f(u)^2}du^2+\frac {E(u,v)}{f(u)^2}(du dv+dv du)
\end{equation}
for a non-vanishing function $E$ and consider the surface $M$ of $\mathbb L^3_1(c)\,{}_f\times I$ parametrized by 
\begin{equation}\label{SubsectTisLLProp1Eq1} 
\phi(u,v)=\left(\tilde{\phi}(u,v),u\right).
\end{equation}
Then, $T=\left(\frac{\partial}{\partial z}\right)^T$ is light-like on $M$.
\end{proposition}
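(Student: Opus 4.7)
The plan is a direct computation: decompose $\partial_z\rvert_M$ in the frame of coordinate vector fields determined by $\phi$, and verify that its tangential part is null in the ambient metric.

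First I would compute the coordinate tangent vectors of $M$ from \eqref{SubsectTisLLProp1Eq1}:
\begin{equation*}
\phi_u=(\tilde\phi_u,1),\qquad \phi_v=(\tilde\phi_v,0).
\end{equation*}
Using the warped product metric $\widetilde g=f(z)^2 g_c+dz^2$ and the hypothesis \eqref{SubsectTisLLProp1Eq2} on the induced metric $\hat g$ of $\hat M$, the induced metric of $M$ satisfies
\begin{equation*}
\langle \phi_u,\phi_u\rangle=f(u)^2\hat g(\tilde\phi_u,\tilde\phi_u)+1=f(u)^2\cdot\bigl(-\tfrac{1}{f(u)^2}\bigr)+1=0,
\end{equation*}
\begin{equation*}
\langle \phi_v,\phi_v\rangle=f(u)^2\hat g(\tilde\phi_v,\tilde\phi_v)=0,\qquad \langle \phi_u,\phi_v\rangle=E(u,v).
\end{equation*}
So in the induced metric on $M$, both coordinate vectors $\phi_u,\phi_v$ are null and are mutually paired by $E$.

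Next I would pin down $T$ using \eqref{partialzDecompT-LL}. Since $\tilde\phi_u,\tilde\phi_v$ are tangent to $\mathbb L^3_1(c)$ and $\partial_z$ is orthogonal to the fiber, one has $\langle \partial_z,\phi_u\rangle=1$ and $\langle \partial_z,\phi_v\rangle=0$. Writing $T=a\phi_u+b\phi_v$ and imposing
\begin{equation*}
\langle T,\phi_u\rangle=\langle \partial_z,\phi_u\rangle=1,\qquad \langle T,\phi_v\rangle=\langle \partial_z,\phi_v\rangle=0,
\end{equation*}
the relations $\langle \phi_u,\phi_u\rangle=\langle \phi_v,\phi_v\rangle=0$ and $\langle \phi_u,\phi_v\rangle=E$ force $a=0$ and $b=1/E$, so $T=\phi_v/E$. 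Because $\phi_v$ is null, $\langle T,T\rangle=0$, which is the claim.

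The argument has no real obstacle; the key (and essentially the only) point is that the prescribed form of $\hat g$ is engineered so that the graph-type parametrization $(\tilde\phi(u,v),u)$ produces two null coordinate vectors on $M$, which forces the tangential component of $\partial_z$ to lie along the null direction $\phi_v$. The excluded degenerate case $T=0$ is automatically avoided since $1/E$ is nowhere zero by the non-vanishing of $E$.
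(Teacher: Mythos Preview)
Your proof is correct and follows essentially the same route as the paper: a direct computation showing $T=\tfrac{1}{E}\partial_v$ is the tangential part of $\partial_z$ and is null. The only cosmetic difference is that the paper exhibits the normal piece $e_3=\tfrac{1}{E}(-\tilde\phi_v,E)$ explicitly and checks $T+e_3=\partial_z$, whereas you recover $T$ by solving the projection equations; both amount to the same verification.
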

\begin{proof}
By considering \eqref{SubsectTisLLProp1Eq2}, we see that $T=\frac 1E\partial_v$  is light-like and the vector field $e_3=\frac{1}{E}(-\tilde{\phi}_v,E)$ is normal to $M$. Furthermore, $T$ and $e_3$ satisfy \eqref{partialzDecompT-LL}. Therefore, $\left(\frac{\partial}{\partial z}\right)^T=T$ is light-like on $M$.
\end{proof}

\begin{defi}
Throughout this paper, a surface $M$ of $\mathbb L^3_1(c)\,{}_f\times I$ parametrized by \eqref{SubsectTisLLProp1Eq1} for a surface $\hat M$ in   $\mathbb L^{3}_{1}(c)$  is going to be called as the surface\textit{ generated by $\hat M$}.
\end{defi}

By making certain specific choices for the surface $\hat{M}$ in \Cref{SubsectTisLLProp1}, we obtain the following explicit examples:
\begin{example} \label{SubsectTisLLProp1S31}
Let $c=1$ and  consider the surface $M$ generated by $\hat M\subset \mathbb S^2_1( \frac 1{r^2}),\ r<1$. Then, by a direct computation considering \eqref{S31TotUmbParam1} and \eqref{SubsectTisLLProp1Eq2}, we obtain the parametrization 
\begin{align}\label{S31TotUmbParam2} 
\begin{split}
\phi(u,v)=&\left(\frac{r}{A(u,v)},r \left(\frac{\cos (a(u))}{A(u,v)}-\sin (a(u))\right),r \left(\frac{\sin (a(u))}{A(u,v)}+\cos (a(u))\right),\right.\\
&\left.
\sqrt{1-r^2},u\right)
\end{split}
\end{align}
of $M$ in $ \mathbb {S}^{3}_1\,{}_f\times I$, where $A$ and $a$ are smooth, non-vanishing functions satisfying
\begin{equation}\label{L31cTotUmbParamAsatisfies}
A_u=-\frac{A ^2}{2 r^2 f ^2 a' }-\frac{1}{2} a'  \left(A ^2+1\right).
\end{equation}
Note that we have $T=\frac1E \partial_v$ and the vector fields
\begin{align*}
\begin{split}
e_3=&\frac{1}{r f ^2 a' }\left( 1,\cos a ,\sin a ,0,r f ^2 a' \right),\\
e_4=&\frac{\sqrt{1-r^2}}{f  A } \left( -1,A\sin a   -\cos a ,-A \cos a-\sin a ,\frac{r A }{\sqrt{1-r^2}},0\right)\\
\end{split}
\end{align*}
form an orthonormal base of the normal $M$ such that $\eta=e_3$, where the function $E$ appearing in \Cref{SubsectTisLLProp1} is
\begin{equation}\label{L31cTotUmbShape0}
E=\frac{r^2 f^2 a' A_v}{A^2}.
\end{equation}
By a direct computation we obtain the shape operators of $M$ as 
\begin{equation} \label{L31cTotUmbShape}
A_{e_3} = \begin{pmatrix}
-\frac{f' }{f } & \frac{a' }{A }+\frac{f' }{f }+\frac{a'' }{a' } \\
0 & -\frac{f' }{f } 
				\end{pmatrix}, \qquad
A_{e_4} = \begin{pmatrix}
-h^4_{12} & -h^4_{22}\\
 0 & -h^4_{12} 
\end{pmatrix},
\end{equation} 
which yields that the surface parametrized by \eqref{S31TotUmbParam2} is a class~$\mathcal A$  surface with light-like $\left(\frac{\partial}{\partial z}\right)^T$, where we have
\begin{equation} \label{L31cTotUmbShape2}
h^4_{12}=h^4_{22}=-\frac{\sqrt{1-r^2}}{r f }.
\end{equation} 
\end{example}


\begin{example} \label{SubsectTisLLProp1H31}
Let $c=-1$ and  consider the surface $M$ generated by $\hat M\subset \mathbb H^2_1(- \frac 1{r^2}),\ r>1$. Then, by a direct computation considering \eqref{H31TotUmbParam1} and \eqref{SubsectTisLLProp1Eq2}, we obtain the parametrization 
\begin{align}\label{H31TotUmbParam2} 
\begin{split}
\phi(u,v)=&\left(r \left(\frac{\cos  a(u) }{A(u,v)}-\sin  a(u) \right),r \left(\frac{\sin  a(u) }{A(u,v)}+\cos  a(u) \right),\frac{r}{A(u,v)},\right.\\
&\left.\sqrt{r^2-1},u\right)
\end{split}
\end{align}
of $M$ in $ \mathbb {H}^{3}_1\,{}_f\times I$, where $A$ and $a$ are smooth, non-vanishing functions satisfying 
$$A_u=\frac{A ^2}{2 r^2 f ^2 a' }-\frac{1}{2} a'  \left(A ^2+1\right).$$
 Similar to \Cref{SubsectTisLLProp1E31},  we observe that the surface $M$ is a class~$\mathcal A$  surface with light-like $\left(\frac{\partial}{\partial z}\right)^T$ by obtaining \eqref{L31cTotUmbShape} for
\begin{align*}
\begin{split}
e_3=&\frac{1}{r f ^2 a' }\left(-\cos a ,-\sin a ,-1,0,r f ^2 a' \right),\\
e_4=&\frac{\sqrt{r^2-1}}{f  A }\left(\cos a -A\sin a   ,A\cos a  +\sin a ,1,\frac{r A }{\sqrt{r^2-1}},0\right)\\
E=&-\frac{r^2 f^2 a' A_v}{A^2},\qquad h^4_{12}=h^4_{22}=\frac{\sqrt{r^2-1}}{r f } 
\end{split}
\end{align*}
\end{example}

\begin{example} \label{SubsectTisLLProp1H31Quad}
Let $c=-1$ and   consider the surface $M$ generated by  flat totally umbilical surface $\hat M$ parametrized by \eqref{H31TotUmbQuadraParam1}. Then, by  considering \eqref{H31TotUmbQuadraParam1} and \eqref{SubsectTisLLProp1Eq2}, we obtain the parametrization 
\begin{align*}
\begin{split}
\phi(u,v)=&\left(\frac{2 c_1^2 F(u)+2 c_2 c_1+F(u)-2 v}{2 \sqrt{2} c_1},-\frac{a \left(c_1 F(u)+c_2\right) (2 v-F(u))}{c_1}-a-\frac{1}{4 a},\right.\\
&\left.-\frac{a \left(c_1 F(u)+c_2\right) (2 v-F(u))}{c_1}-a+\frac{1}{4 a},\frac{\left(2 c_1^2-1\right) F(u)+2 \left(c_1 c_2+v\right)}{2 \sqrt{2} c_1},u\right)
\end{split}
\end{align*}
of $M$ in $ \mathbb {H}^{3}_1\,{}_f\times I$, where $a,\ c_1,$ and $c_2$ are some constants with $a,b\neq 0$. By a direct computation, we obtain the shape operators of $M$ as
\begin{align}\label{H31TotUmbQuadShpOp} 
\begin{split}
A_{e_3}=\frac {f'}f I,\qquad A_{e_4}=
\begin{pmatrix}
 \frac{1}{f} & \frac{1}{f} \\
 0 & \frac{1}{f} \\
\end{pmatrix},
\end{split}
\end{align}
where we have
\begin{align*}
\begin{split}
e_3=&\frac{1}{\sqrt{2} c_1 f}\left(1,2 \sqrt{2} a \left(c_1 F+c_2\right),2 \sqrt{2} a \left(c_1 F+c_2\right),-1,\sqrt{2} c_1 f \right),\\
e_4=&\frac{1}{2 \sqrt{2} c_1 f}\left(-2 c_1^2 F-2 c_2 c_1-F+2 v,2 \sqrt{2} c_1 \left(\frac{a \left(c_1 F+c_2\right) (2 v-F)}{c_1}-a+\frac{1}{4 a}\right),\right.\\
&\left.-2 \sqrt{2} c_1 \left(-\frac{a \left(c_1 F+c_2\right) (2 v-F)}{c_1}+a+\frac{1}{4 a}\right),\left(1-2 c_1^2\right) F-2 \left(c_1 c_2+v\right),0\right)\\
E=&f.
\end{split}
\end{align*}
Hence, $M$ is a class~$\mathcal A$  surface with light-like $\left(\frac{\partial}{\partial z}\right)^T$.
\end{example}


\begin{example} \label{SubsectTisLLProp1E31}
Let $c=0$ and   consider the surface $M$ generated by $\hat M\subset \mathbb S^2_1(\frac 1{r^2})$. Then, by a direct computation considering \eqref{E31TotUmbParam1} and \eqref{SubsectTisLLProp1Eq2}, we obtain the parametrization 
\begin{align*} 
\begin{split}
\phi(u,v)=&\left(\frac{r}{A(u,v)},r \left(\frac{\cos a(u)}{A(u,v)}-\sin a(u)\right),r \left(\frac{\sin a(u)}{A(u,v)}+\cos a(u)\right),u\right)
\end{split}
\end{align*}
of $M$ in $ \mathbb {E}^{3}_1\,{}_f\times I$, where $A$ and $a$ are smooth, non-vanishing functions satisfying \eqref{L31cTotUmbParamAsatisfies}.
Similar to \Cref{SubsectTisLLProp1S31},  we observe that the surface $M$ is a class~$\mathcal A$  surface with light-like $\left(\frac{\partial}{\partial z}\right)^T$ by obtaining \eqref{L31cTotUmbShape} for
\begin{align*}
\begin{split}
e_3=&\frac{1}{r f^2 a'}\left(1,{\cos  a },{\sin  a },{r f^2 a'}\right),\\
e_4=&-\frac 1{Af}\left(1,\cos  a -A\sin  a,A\cos  a+\sin  a ,0\right)\\
E=&\frac{r^2 f^2 a' A_v}{A^2},\qquad h^4_{12}=h^4_{22}=-\frac{1}{r f } 
\end{split}
\end{align*}
\end{example}


\begin{example} \label{SubsectTisLLProp1L31c}
Consider the surface $M$ in  $\mathbb L^3_1(c)\,{}_f\times I$ generated by a null scroll  $\hat M$  in $ \mathbb L^3_1(c)$ described in \Cref{L31cNullScroll} for some smooth functions $a$ and $b$. Then, by a direct computation considering \eqref{L31c-NullScrollParam} and \eqref{SubsectTisLLProp1Eq2}, we obtain the parametrization 
\begin{align}\label{L31cTotUmbParam2} 
\begin{split}
\phi(u,v)=&\left(\alpha(U(u))+V(u,v) B(U(u)),u\right)
\end{split}
\end{align}
of  $M$ in $\mathbb L^{3}_1(c)\,{}_f\times I$, where $V$ and $U$ are smooth, non-vanishing functions satisfying 
\begin{equation}\label{L31cParamAsatisfies}
 U'  V ^2 \left(b(U )^2+c\right)-2 V_u  =-\frac1{f^2U'  }\, .
\end{equation}
Similar to \Cref{SubsectTisLLProp1S31},  we observe that the shape operators of $M$ has the form
\begin{equation} \label{L31cB-ScrollShape}
A_{e_3} = \begin{pmatrix}
 -\frac{f'}{f} & \frac{f' U'+f \left(\left(b(U)^2+c\right) V U'{}^2+U''\right)}{f U'} \\
 0 & -\frac{f'}{f} 
				\end{pmatrix}, \qquad
A_{e_4} = \begin{pmatrix}
 \frac{b(U)}{f} & \frac{f^2 \left(a+V b'\right) U'{}^2+b(U)}{f} \\
 0 & \frac{b(U)}{f} \\
\end{pmatrix},
\end{equation}  
which yields that $M$ is a class~$\mathcal A$  surface with light-like $\left(\frac{\partial}{\partial z}\right)^T$, where we have
\begin{eqnarray}
\nonumber e_3&=&\frac 1{U'f^2}\left( B(U) ,U'f^2\right),\\
\nonumber e_4&=&\frac {1}{f}\left( Vb(U)B(U)+C(U),0\right)\\
\label{L31cParamEDef}E&=&-U' f^2 V_v .
\end{eqnarray}
\end{example}

\subsection{Surfaces with Light-Like $T$}

In this subsection, we consider time-like surfaces under the condition that the vector field $T=\left(\frac{\partial}{\partial z}\right)^T$ is light-like.

Let $M$ be an oriented time-like surface in the space-time $\mathbb L^3_1(c)\,{}_f\times I$ such that the tangent vector field $T$, defined by \eqref{partialz}, is light-like at every point of $M$. In this case, using equation \eqref{partialzDecompT-LL}, one can construct a pseudo-orthonormal frame field $\{T, U\}$ for the tangent bundle of $M$ satisfying
$$\langle T,T\rangle=\langle U,U\rangle=0,\qquad \langle T,U\rangle=-1$$
as  well as an orthonormal frame field $\{e_3,e_4\}$ for the normal bundle of $M$. Note that \eqref{partialzDecompT-LL} implies
\begin{align}\label{SubsectTisLL-Eq1}
\begin{split}
T=(\bar T,0),\qquad& U=(\bar U,-1),\\
e_3=(\bar{e_3},1),\qquad& e_4=(\bar{e_4},0).
\end{split}
\end{align}

We are going to use the following lemma throughout this article:
\begin{lemma}
		Let $M$ be a time-like surface in $\mathbb L^3_1(c)\,{}_f\times I$ such that the vector field $\left(\frac{\partial}{\partial z}\right)^T$ is light-like on $M$. Then, the vector field $U$ defined by  \eqref{partialzDecompT-LL} satisfies
\begin{equation}\label{SubsectTisLLLemma0Eq1}
    \left.f'\right|_M=-U\left(\left.f\right|_M\right).
\end{equation}
\end{lemma}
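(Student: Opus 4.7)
The plan is to observe that the pseudo-orthonormality conditions force the $\frac{\partial}{\partial z}$-component of $U$ to equal $-1$, so that acting by $U$ on any function of $z$ alone reproduces minus its $z$-derivative. The whole identity then reduces to a single application of the chain rule.

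First, I would compute $\langle U,\frac{\partial}{\partial z}|_M\rangle$ by substituting \eqref{partialzDecompT-LL} on the right-hand side: it equals $\langle U,T\rangle+\langle U,e_3\rangle=-1+0=-1$, where the first term uses the pseudo-orthonormality $\langle T,U\rangle=-1$ and the second uses that $U$ is tangent to $M$ while $e_3$ is normal to $M$. This recovers the component $U_4=-1$ already recorded in \eqref{SubsectTisLL-Eq1}. In the convention $X=\bar X+X_4\,\frac{\partial}{\partial z}$ adopted in Section~2 (valid because $\frac{\partial}{\partial z}$ is a unit vector field for $\widetilde g$), this identity says precisely $U=\bar U-\frac{\partial}{\partial z}$.

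Next, I would exploit that the warping function $f$ depends only on the $z$-coordinate, so that any vector field tangent to the fibre $\mathbb L^3_1(c)$ annihilates $f|_M$; in particular $\bar U(f|_M)=0$. Applying $U$ to $f|_M$ and using the chain rule therefore yields
\[
U(f|_M)=\bar U(f|_M)-\frac{\partial f}{\partial z}\bigg|_M=-f'|_M,
\]
which rearranges to \eqref{SubsectTisLLLemma0Eq1}.

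There is no real obstacle: the lemma unwinds directly from the pseudo-orthonormal decomposition of $TM$ against the normal space. The only content worth highlighting is why the sign $-1$ on the right-hand side of \eqref{SubsectTisLLLemma0Eq1} is forced, and this is dictated by the normalisation $\langle T,U\rangle=-1$ together with $\langle U,e_3\rangle=0$; the hypothesis that $T$ is light-like enters precisely by making $T$ itself one of the vectors in the pseudo-orthonormal frame, so that the decomposition \eqref{partialzDecompT-LL} becomes available.
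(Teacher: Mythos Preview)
Your proposal is correct and follows essentially the same approach as the paper: both arguments reduce to the fact that $U_4=\langle U,\partial/\partial z\rangle=-1$ (already recorded in \eqref{SubsectTisLL-Eq1}) together with the chain rule applied to a function of $z$ alone. The only cosmetic difference is that the paper makes the chain rule explicit via an integral curve of $U$, whereas you invoke the decomposition $U=\bar U-\partial/\partial z$ directly; neither version adds or omits any real content.
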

\begin{proof}
Let $p=(\tilde p,z(p))\in M$ and consider an integral curve $\alpha=(\bar \alpha,\alpha_4)$ of $e_1$ starting from $p$. Then, we  have
$$U(\left.f\right|_M)_p=\left.\frac{d}{du}\right|_{u=0} (f\circ\alpha)(u)=\left.\frac{d}{du}\right|_{u=0} f(\alpha_4(u))$$
which yields
\begin{equation}\label{SubsectTisLLLemma0PEq1}
  U(\left.f\right|_M)_p=\alpha'_4(0)f'(\alpha_4(0)).
\end{equation}
Since $\alpha(0)=p$ and $\alpha'(0)=U_p$, \eqref{SubsectTisLL-Eq1} implies $\alpha_4'(0)=-1$. Hence, \eqref{SubsectTisLLLemma0PEq1} turns into
$$U(\left.f\right|_M)_p=\-f'(z(p)).$$
which yields \eqref{SubsectTisLLLemma0Eq1}.
\end{proof}


On the other hand, the Levi-Civita connection $\nabla$ of $M$ satisfies
\begin{align}\label{SubsectTisLLLCConn1ALL}
\begin{split}
\widetilde\nabla_{T}T=\omega_1T,\qquad \widetilde\nabla_{T}U=-\omega_1U,\\
\widetilde\nabla_{U}T=\omega_2T,\qquad \widetilde\nabla_{U}U=-\omega_2U,
\end{split}
\end{align}
while the normal connection $\nabla^\perp$ of $M$ induces 
\begin{align}\label{SubsectTisLLNrmCon1ALL}
\begin{split}
\widetilde\nabla^\perp_{T}e_3=\omega_3e_4,\qquad \widetilde\nabla^\perp_{T}e_4=-\omega_3e_3,\\
\widetilde\nabla^\perp_{U}e_3=\omega_4e_4,\qquad \widetilde\nabla^\perp_{U}e_4=-\omega_4e_3,
\end{split}
\end{align}
for some smooth functions $\omega_i$. Moreover, the second fundamental form $h$ of $M$ takes the form
\begin{align}\label{SubsectTisLL2ndFun1ALL}
\begin{split}
h(T,T) &=h^3_{11}e_{3}+ h^4_{11}e_{4}\\
h(T,U)&=h^3_{12}e_{3}+ h^4_{12}e_{4}\\
h(U,U)&=h^3_{22}e_{3}+ h^4_{22}e_{4},
\end{split}
\end{align}
where $h^a_{jk}$ are some smooth functions, with $a = 3,4$ and $j,k = 1,2$.


Now, we are ready to prove the following lemma:
\begin{lemma}\label{SubsectTisLLLemma1}
		Let $M$ be a time-like surface in $\mathbb L^3_1(c)\,{}_f\times I$ such that the vector field $\left(\frac{\partial}{\partial z}\right)^T$ is light-like on $M$   and consider the positively oriented, global frame frame field $\{T,U;e_3,e_4\}$ defined by \eqref{partialzDecompT-LL}. Then, the following conditions hold:
\begin{enumerate}
\item [(1)] The Levi-Civita connection $\nabla$ of $M$ satisfies
		\begin{align}\label{SubsectTisLLLCConn2}
			\nabla_T T = \nabla_T U = 0, \quad \nabla_U T = \left(\frac{f^{\prime}}{f}-h^{3}_{22}\right) T, \quad \nabla_U U = -\left(\frac{f^{\prime}}{f}-h^{3}_{22}\right)U.
		\end{align}
\item [(2)] The matrix representation of the shape operators of $M$ corresponding to the pseudo-orthonormal frame $\{T,U\}$ has the form
		\begin{equation} \label{SubsectTisLLShape1}
			\begin{aligned}
				A_{e_3} &= \begin{pmatrix}
					-\frac{f^{\prime}}{f} & -h^3_{22} \\
					0 & -\frac{f^{\prime}}{f}
				\end{pmatrix}, \qquad
				A_{e_4} &= \begin{pmatrix}
					-h^4_{12} & -h^4_{22} \\
					-h^4_{11} & -h^4_{12}
				\end{pmatrix}
			\end{aligned}
		\end{equation} 
\item [(3)]  The second fundamental form $h$ of $M$ satisfies
			\begin{align}\label{SubsectTisLL2ndFun2}
			h(T,T) = h^4_{11}e_{4} \quad h(T,U) = -\frac{f^{\prime}}{f}e_{3}+ h^4_{12}e_{4}, \quad h(U,U)= h^3_{22}e_{3}+h^4_{22}e_{4}.
		\end{align}
\item [(4)]  The  normal connection $\nabla^\perp$ of $M$ induces the following relations
		\begin{align}\label{SubsectTisLLNrmCon2}
			\nabla^{\perp}_{T}e_3=-h^4_{11}e_{4}, \quad \nabla^{\perp}_{U}e_3=-h^4_{12}e_{4}.
		\end{align} 
\end{enumerate}
\end{lemma}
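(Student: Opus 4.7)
The plan is to extract all four claims from one fundamental computation: differentiate the decomposition $\partial_z|_M = T + e_3$ from \eqref{partialzDecompT-LL} and compare with the warped-product identity
\[
\widetilde\nabla_X \partial_z = \frac{f'}{f}\,\bar X,
\]
which follows immediately from \eqref{L31cxfILCconn} by taking $Y=\partial_z$ (so $\bar Y=0$ and $Y_4=1$). The Gauss--Weingarten formul\ae\ expand the left-hand side as $\widetilde\nabla_X (T+e_3) = \nabla_X T + h(X,T) - A_{e_3}X + \nabla^\perp_X e_3$, so splitting into tangent and normal parts yields two vector equations per choice of $X$. I would apply this for $X = T$, where \eqref{SubsectTisLL-Eq1} gives $\bar T = T$ and so the right-hand side equals $(f'/f)T$, and then for $X = U$, where $\bar U = U + \partial_z = U + T + e_3$ on $M$, producing $(f'/f)(U+T+e_3)$.

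Next I would combine these identities with the general form of $\nabla$, $\nabla^\perp$, and $h$ in \eqref{SubsectTisLLLCConn1ALL}--\eqref{SubsectTisLL2ndFun1ALL}, together with the shape-operator expansions
\[
A_{e_a}T = -h^a_{12}T - h^a_{11}U,\qquad A_{e_a}U = -h^a_{22}T - h^a_{12}U,\qquad a=3,4,
\]
which follow from \eqref{hArelby} and the duality $v=-\langle v,U\rangle T - \langle v,T\rangle U$ valid in the pseudo-orthonormal frame. The tangent part of the $X=T$ equation then reads $(\omega_1+h^3_{12})T + h^3_{11}U = (f'/f)T$, forcing $h^3_{11}=0$ and $\omega_1 = f'/f - h^3_{12}$; the tangent part of the $X=U$ equation next yields $h^3_{12} = f'/f$ and $\omega_2 = f'/f - h^3_{22}$, which in turn collapses $\omega_1$ to $0$. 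These identities give (1), the $A_{e_3}$ part of (2), and the $h(T,T)$, $h(T,U)$ entries of (3). The normal-part equations then finish the job: substituting $h(T,T)=h^4_{11}e_4$ and $\nabla^\perp_T e_3=\omega_3 e_4$ into the $X=T$ normal equation gives $\omega_3=-h^4_{11}$, while substituting the already-determined $h(T,U)$ with $\nabla^\perp_U e_3=\omega_4 e_4$ into the $X=U$ normal equation cancels the $e_3$ contribution and yields $\omega_4=-h^4_{12}$. Combined with the $A_{e_4}$ expansion above, this produces the remaining entry of (2), the $h(U,U)$ statement in (3), and both relations in (4).

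The only real technical obstacle is the sign-bookkeeping in the pseudo-orthonormal frame: because $\langle T,U\rangle=-1$, the coefficients in the basis $\{T,U\}$ are extracted from inner products against $U$ and $T$ with an overall minus sign, and this is where nearly all the delicate algebra lives. Once that duality is applied consistently at each substitution, every claim in the lemma follows directly from the two decompositions of $\widetilde\nabla_X\partial_z$ above.
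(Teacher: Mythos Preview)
Your proposal is correct and follows essentially the same route as the paper: both differentiate the decomposition $\partial_z|_M=T+e_3$, use the warped-product identity $\widetilde\nabla_X\partial_z=(f'/f)\bar X$, expand via Gauss--Weingarten, split into tangent and normal parts for $X=T$ and $X=U$, and then read off the constraints on the coefficients $\omega_i$ and $h^a_{ij}$ against the generic forms \eqref{SubsectTisLLLCConn1ALL}--\eqref{SubsectTisLL2ndFun1ALL}. Your explicit rewriting $\bar U=U+T+e_3$ is exactly the paper's $\bar X=X-\langle X,T\rangle\,\partial_z$ specialized to $X=U$, so the two computations coincide line for line.
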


	\begin{proof}
We are going to use the formul\ae\ given by \eqref{SubsectTisLL-Eq1} - \eqref{SubsectTisLL2ndFun1ALL}. Note that by combining  \eqref{SubsectTisLLNrmCon1ALL} with  \eqref{hArelby}, we have 	
\begin{align}\label{SubsectTisLLLemma1PrfEq0}   
\begin{split} 		
A_{e_3}T=-h^{3}_{12}T-h^{3}_{11}U,\qquad& A_{e_3}U=-h^{3}_{22}T-h^{3}_{12}U,\\
A_{e_4}T=-h^{4}_{12}T-h^{4}_{11}U,\qquad& A_{e_4}U=-h^{4}_{22}T-h^{4}_{12}U.
\end{split} 		
\end{align}
On the other hand, by getting the covariant derivative of \eqref{partialzDecompT-LL} along a vector field $X$ tangent to $M$, we obtain
\begin{equation}\label{SubsectTisLLLemma1PrfEq1}   
\widetilde\nabla_X \frac{\partial}{\partial z}= \nabla_X T+h(X,T)-A_{e_3} X+\nabla^\perp_X e_3.
\end{equation}
Moreover, \eqref{L31cxfILCconn} implies $\widetilde\nabla_X \frac{\partial}{\partial z}= \frac{f'}f\bar X$ which is equivalent to
\begin{equation}\label{SubsectTisLLLemma1PrfEq2}   
\widetilde\nabla_X \frac{\partial}{\partial z}= \frac{f'}f\left(X-\langle X,T\rangle\frac{\partial}{\partial z}\right)
\end{equation}
because of \eqref{L31cxfDefX4andbarX}. By considering   \eqref{partialzDecompT-LL} and \eqref{SubsectTisLLLemma1PrfEq2}, we observe that the tangential and normal parts of \eqref{SubsectTisLLLemma1PrfEq1} imply
\begin{equation}\label{SubsectTisLLLemma1PrfEq3}   
\frac{f'}f\left(X-\langle X,T\rangle T\right)= \nabla_X T-A_{e_3} X
\end{equation}
and
\begin{equation}\label{SubsectTisLLLemma1PrfEq4}   
-\frac{f'}f\langle X,T\rangle e_3= h(X,T)+\nabla^\perp_X e_3,
\end{equation}
respectively.

By combining \eqref{SubsectTisLL-Eq1}, \eqref{SubsectTisLLLCConn1ALL} and \eqref{SubsectTisLLLemma1PrfEq0} with \eqref{SubsectTisLLLemma1PrfEq3} for $X=T$ and $X=U$, we get
\begin{align}\label{SubsectTisLLLemma1PrfEq3-2} 
\begin{split} 		
\dfrac{f^{\prime}}{f}T&=h^{3}_{11}U+(\omega_1 +h^{3}_{12})T,\\
\dfrac{f^{\prime}}{f}\left(T+U\right)&=(\omega_2+h^{3}_{22})T+h^{3}_{12}U
\end{split} 		
\end{align}
and, because of 		\eqref{SubsectTisLLNrmCon1ALL}	and  \eqref{SubsectTisLL2ndFun1ALL}, \eqref{SubsectTisLLLemma1PrfEq4} for $X=T$ and $X=U$		imply
\begin{align}\label{SubsectTisLLLemma1PrfEq4-2} 
\begin{split} 		
0&=h^{3}_{11}e_{3}+(\omega_3+h^{4}_{11})e_{4},\\\
\dfrac{f^{\prime}}{f}\left(e_3\right)&=h^{3}_{12}e_3+(h^{4}_{12}+\omega_4)e_4.
\end{split} 		
\end{align} 		
 By a direct computation using \eqref{SubsectTisLLLemma1PrfEq3-2}, we obtain 
\begin{align}\label{SubsectTisLLLemma1PrfEq5} 
\begin{split} 	
			h^{3}_{11} = 0, &\quad	h^{3}_{12}=\dfrac{f^{\prime}}{f},\\
			\omega_1 + h^{3}_{12} = \dfrac{f'}{f},		&\quad	\omega_2+h^{3}_{22} = \dfrac{f'}{f},\\
\end{split} 		
\end{align} 
and the equations appearing in \eqref{SubsectTisLLLemma1PrfEq4-2} give
\begin{equation}\label{SubsectTisLLLemma1PrfEq6} 
			\omega_3=-h^{4}_{11}, \quad			\omega_4=-h^{4}_{12}.
\end{equation} 		
By using \eqref{SubsectTisLLLemma1PrfEq5}, we observe that  \eqref{SubsectTisLLLCConn1ALL}, \eqref{SubsectTisLLLemma1PrfEq0}  and  \eqref{SubsectTisLL2ndFun1ALL} turn into \eqref{SubsectTisLLLCConn2}, \eqref{SubsectTisLLShape1} and \eqref{SubsectTisLL2ndFun2}, respectively, and \eqref{SubsectTisLLLemma1PrfEq6}, together with \eqref{SubsectTisLLNrmCon1ALL}, yields \eqref{SubsectTisLLNrmCon2}. 
\end{proof}


Next, by using	\Cref{SubsectTisLLLemma1}, we construct a local coordinate system compatible with the global frame field $\{T,U;e_3,e_4\}$.
\begin{lemma}\label{SubsectTisLLLemma2}
		Let $M$ be a time-like surface in $\mathbb L^3_1(c)\,{}_f\times I$ such that the vector field $\left(\frac{\partial}{\partial z}\right)^T$ is light-like on $M$ and $p\in M$.  Then, there exists a local coordinate system  $\left(\mathcal N_p, \left(u,v\right)\right)$ such that $\mathcal N_p\ni p$ and the following conditions hold:
\begin{itemize} 
\item [(a)] The vector field $T$ and $U$ defined by \eqref{partialzDecompT-LL} satisfy
\begin{eqnarray}
\label{SubsectTisLLLemma2Eq1a} \left.T\right|_{\mathcal N_p}&=&\frac{1}{E}\partial_v,\\
\label{SubsectTisLLLemma2Eq1b} \left.U\right|_{\mathcal N_p}&=&-\partial_u
\end{eqnarray}
for a non-vanishing function $E$ defined on $\mathcal N_p$,
\item [(b)] The fuction $h^{3}_{22}$ satisfies
\begin{equation}
\label{SubsectTisLLLemma2Eq1c} h^{3}_{22}=-\frac{E_{u}}{E}+\dfrac{f^{\prime}}{f}.
\end{equation}
\end{itemize}
\end{lemma}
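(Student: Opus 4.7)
The plan is to build the coordinate chart by flowing an integral curve of $T$ through $p$ along the vector field $U$, and then to read off $E$ and $h^{3}_{22}$ from the commutator $[U,T]$. The essential ingredient comes from \Cref{SubsectTisLLLemma1}: combining $\nabla_TU=0$ and $\nabla_UT=(f'/f-h^{3}_{22})T$ from \eqref{SubsectTisLLLCConn2} gives
\begin{equation*}
[T,U]=\nabla_TU-\nabla_UT=-\Bigl(\frac{f'}{f}-h^{3}_{22}\Bigr)T,
\end{equation*}
so $[T,U]$ lies in the one-dimensional distribution spanned by $T$.

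I would fix an integral curve $\gamma\colon J\to M$ of $T$ with $\gamma(0)=p$ and $\gamma'(v)=T_{\gamma(v)}$, denote by $\phi^U_s$ the local flow of $U$, and set $\Phi(u,v):=\phi^U_{-u}\bigl(\gamma(v)\bigr)$ on a neighborhood of $(0,0)$. Then $\partial_u\Phi|_{(0,0)}=-U_p$ and $\partial_v\Phi|_{(0,0)}=T_p$, and these two vectors are linearly independent because $\{T,U\}$ is a pseudo-orthonormal frame. By the inverse function theorem, $\Phi$ is a local diffeomorphism, and its inverse supplies the chart $(\mathcal N_p,(u,v))$ about $p$. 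By construction the $u$-curves are integral curves of $-U$, so $\partial_u=-U$, which is \eqref{SubsectTisLLLemma2Eq1b}. Since $[T,U]\in\mathrm{span}\{T\}$, the flow $\phi^U_{-u}$ preserves the one-dimensional distribution generated by $T$; consequently
\begin{equation*}
\partial_v\Phi(u,v)=(\phi^U_{-u})_{*}\,T_{\gamma(v)}=E(u,v)\,T
\end{equation*}
for a smooth function $E$ with $E(0,v)\equiv 1$, which is nowhere zero in a (possibly smaller) neighborhood of $p$; this yields \eqref{SubsectTisLLLemma2Eq1a}.

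For part (b) I would simply compute $[U,T]$ in the chart. Using $U=-\partial_u$ and $T=E^{-1}\partial_v$ together with $[X,fY]=X(f)Y+f[X,Y]$ and $[\partial_u,\partial_v]=0$, one gets
\begin{equation*}
[U,T]=\bigl[-\partial_u,E^{-1}\partial_v\bigr]=\frac{E_u}{E^2}\partial_v=\frac{E_u}{E}\,T.
\end{equation*}
On the other hand, the identity displayed in the first paragraph gives $[U,T]=-[T,U]=(f'/f-h^{3}_{22})T$. Equating the two expressions and cancelling $T$ yields $h^{3}_{22}=f'/f-E_u/E$, which is precisely \eqref{SubsectTisLLLemma2Eq1c}. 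The only mildly subtle point is the claim that $\phi^U_s$ preserves $\mathrm{span}\{T\}$, but this is just the classical fact that $\mathcal L_UT\in\mathrm{span}\{T\}$ forces $(\phi^U_s)_{*}T$ to remain a scalar multiple of $T$ along the flow — everything else is either a direct application of the inverse function theorem or a one-line coordinate computation.
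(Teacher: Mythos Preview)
Your proof is correct. The argument is a variant of the paper's: both rest on the same bracket computation $[T,U]=-(f'/f-h^{3}_{22})T$ coming from \eqref{SubsectTisLLLCConn2}, but you organize the construction in the reverse order. The paper first \emph{chooses} a nonvanishing $E$ solving the linear ODE $U(E)+E(f'/f-h^{3}_{22})=0$ along integral curves of $U$, so that $ET$ and $-U$ commute, and then invokes the standard ``commuting vector fields yield coordinate vector fields'' lemma; the relation \eqref{SubsectTisLLLemma2Eq1c} is then just the ODE rewritten. You instead build the chart directly by transporting an integral curve of $T$ along the flow of $-U$, obtain $\partial_u=-U$ for free, use $[U,T]\in\mathrm{span}\{T\}$ to conclude that $\partial_v$ stays proportional to $T$, and then read off both $E$ and \eqref{SubsectTisLLLemma2Eq1c} from the coordinate expression of the bracket. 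Your route makes the nonvanishing of $E$ transparent (since $E(0,v)\equiv 1$), while the paper's version is a line shorter because it packages the flow argument into the commuting-fields lemma; mathematically the two are equivalent.
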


\begin{proof}
Because of \eqref{SubsectTisLLLCConn2}, we have $[ET,-U]=\left(E\left(\dfrac{f^{\prime}}{f}-h^{3}_{22}\right)+U(E)\right)T$ whenever $E\in C^\infty(M)$. So, if $E$ is a non-vanishing function satisfying
\begin{equation}
\label{SubsectTisLLLemma2PEq1} E\left(\dfrac{f^{\prime}}{f}-h^{3}_{22}\right)+U(E)=0,
\end{equation}
then we have $[ET,-U]=0$. Therefore there exists a local coordinate system  $\left(\mathcal N_p, \left(u,v\right)\right)$ near to $p$ such that $ET=\partial_v$ and $-U=\partial_u$ from which we obtain \eqref{SubsectTisLLLemma2Eq1a} and \eqref{SubsectTisLLLemma2Eq1b}. Consequently, \eqref{SubsectTisLLLemma2Eq1b} and \eqref{SubsectTisLLLemma2PEq1} implies \eqref{SubsectTisLLLemma2Eq1c}.
\end{proof}
	

Next, we have the following local classification theorem for the surfaces satisfying the condition that  $\left(\frac{\partial}{\partial z}\right)^T$ is light-like.
\begin{theorem}\label{SubsectTisLLThm1}
Let $M$ be a time-like surface in  $\mathbb L^3_1(c)\,{}_f\times I$. Then, $\left(\frac{\partial}{\partial z}\right)^T$ is light-like on $M$ if and only if every point $p\in M$ has a neighborhood which can be parametrized as  given in \Cref{SubsectTisLLProp1}.
\end{theorem}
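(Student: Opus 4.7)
The theorem is an ``if and only if'' statement, so I will split the proof into two directions.

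\textbf{The ``if'' direction.} This is essentially the content of \Cref{SubsectTisLLProp1}, which already produces a surface with light-like $\left(\frac{\partial}{\partial z}\right)^T$ starting from a time-like surface $\hat M\subset\mathbb L^3_1(c)$ whose induced metric has the form \eqref{SubsectTisLLProp1Eq2}. So there is nothing new to prove here; I would just refer to that proposition.

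\textbf{The ``only if'' direction.} Assuming $\left(\frac{\partial}{\partial z}\right)^T$ is light-like on $M$, the plan is to locally write the immersion in the form $\phi(u,v)=(\tilde\phi(u,v),u)$ for appropriately chosen coordinates. I would begin by invoking \Cref{SubsectTisLLLemma2} to get, around an arbitrary $p\in M$, a local coordinate system $(u,v)$ on a neighborhood $\mathcal N_p$ with $T=\tfrac{1}{E}\partial_v$ and $U=-\partial_u$ for some non-vanishing function $E$. Writing the immersion as $\phi(u,v)=(\tilde\phi(u,v),z(u,v))$ with $\tilde\phi=\Pi\circ\phi$, and using \eqref{SubsectTisLL-Eq1}, which tells us that $T=\bar T$ has zero fourth component and $U=(\bar U,-1)$ has fourth component $-1$, I would read off
\begin{equation*}
\partial_u z = (d\phi(\partial_u))_4=(-U)_4=1,\qquad \partial_v z=(d\phi(\partial_v))_4=(ET)_4=0,
\end{equation*}
so $z(u,v)=u+C$ for some constant $C$. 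After the translation $u\mapsto u-C$, the fourth coordinate becomes precisely $u$, giving the desired parametrization $\phi(u,v)=(\tilde\phi(u,v),u)$.

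\textbf{Verifying the metric form.} It remains to check that the induced metric of $\hat M$, viewed in $\mathbb L^3_1(c)$ via $\tilde\phi$, takes the form \eqref{SubsectTisLLProp1Eq2}. I would compute the three components of $\phi^*\widetilde g$ using the ambient metric $\widetilde g=f^2 g_c+dz^2$, the relations $\phi_u=-U$ and $\phi_v=ET$, and the pseudo-orthonormality $\langle T,T\rangle=\langle U,U\rangle=0,\ \langle T,U\rangle=-1$. From $\langle\phi_u,\phi_u\rangle=0$, $\langle\phi_v,\phi_v\rangle=0$ and $\langle\phi_u,\phi_v\rangle=E$, together with the fact that $\tilde\phi_v=ET$ has no fourth component while $\tilde\phi_u$ does, I extract
\begin{equation*}
g_c(\tilde\phi_u,\tilde\phi_u)=-\tfrac{1}{f^2},\qquad g_c(\tilde\phi_v,\tilde\phi_v)=0,\qquad g_c(\tilde\phi_u,\tilde\phi_v)=\tfrac{E}{f^2},
\end{equation*}
which is exactly \eqref{SubsectTisLLProp1Eq2}.

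The computation is almost purely bookkeeping once \Cref{SubsectTisLLLemma2} is in place. The only mildly delicate point I anticipate is the argument that the fourth coordinate of $\phi$ really is $u$ (up to a constant shift), because it requires being explicit about the distinction between $\partial_u$ as a tangent vector to $M$ and $d\phi(\partial_u)$ as a tangent vector to the ambient product; but this is handled cleanly by \eqref{SubsectTisLL-Eq1}, which records the ambient components of $T$ and $U$.
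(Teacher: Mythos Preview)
Your proposal is correct and follows essentially the same approach as the paper: invoke \Cref{SubsectTisLLLemma2} for the local coordinates, use \eqref{SubsectTisLL-Eq1} to identify the fourth coordinate as $u$ (up to a constant), and then read off the three components of $g_c$ from the pseudo-orthonormality of $\{T,U\}$ and the warped metric $\widetilde g=f^2 g_c+dz^2$. One minor slip in your write-up: it is $\phi_u=(\tilde\phi_u,1)$ that carries the nonzero fourth component, not $\tilde\phi_u$ itself (which lives in $\mathbb L^3_1(c)$); the computation is unaffected.
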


\begin{proof}
	In order to prove the necessary condition, consider a local coordinate system  $\left(\mathcal N_p, \left(u,v\right)\right)$  given in \Cref{SubsectTisLLLemma2} near to an arbitrary $p\in M$. Let $\phi(u,v)=(\tilde{\phi}(u,v),Z(u,v))$ be the parametrization of $\mathcal N_p$, where we define
$$\tilde{\phi}:=\Pi\circ\phi,\qquad Z:=\phi_4$$
and $\Pi: \mathbb L^3_1(c)\times I\to\mathbb L^3_1(c)$ is the canonical projection. Then, by combining \eqref{SubsectTisLLLemma2Eq1a} and  \eqref{SubsectTisLLLemma2Eq1b} with \eqref{SubsectTisLL-Eq1}, we get
\begin{eqnarray*}	
\frac{1}{E}\left(\tilde{\phi}_v,Z_v\right)&=T=&\bar{T},\\
-\left(\tilde{\phi}_u,Z_u\right)&=U=&(\bar{U},-1)
\end{eqnarray*}	
on $\mathcal N_p$ from which we have
\begin{eqnarray}	
\label{SubsectTisLLProp1PrEq1} Z_v=1, &\qquad& Z_u=0,\\
\label{SubsectTisLLProp1PrEq2} \tilde{\phi}_v=\bar{T},&\qquad& -\tilde{\phi}_u=\bar{U}.
\end{eqnarray}	
Because of \eqref{SubsectTisLLProp1PrEq1}, by a translation on the coordinate $u$, we have  $Z(u,v)=u$. Therefore, $\mathcal N_p$ can be parametrized as given in \eqref{SubsectTisLLProp1Eq1}  for an $\mathbb L^3_{1}(c)$-valued smooth function $\tilde{\phi}$. Furthermore, \eqref{SubsectTisLL-Eq1} and \eqref{SubsectTisLLProp1PrEq2} imply
$$g_{c}\left(\tilde{\phi}_u,\tilde{\phi}_u\right)=-\frac{1}{f(u)^2},\quad g_{c}\left(\tilde{\phi}_u,\tilde{\phi}_v\right)=\frac{E}{f(u)^2},\quad g_{c}\left(\tilde{\phi}_v,\tilde{\phi}_v\right)=0.$$
Consequently, $\tilde{\phi}$ is a  parametrization of a Lorentzian surface $\hat M$ in $\mathbb L^3_{1}(c)$ with the induced  metric given by \eqref{SubsectTisLLProp1Eq2}. Thus, the proof of the necessary condition is completed. The proof  of the sufficient condition is obtained in \Cref{SubsectTisLLProp1}.
\end{proof}



\section{Class $\mathcal A$ Surfaces}
In this subsection, we are going to consider class~$\mathcal A$  surfaces in $\mathbb L^3_1(c)\,{}_f\times I$  such that $\left(\frac{\partial}{\partial z}\right)^T$ is light-like.

Let $\hat M$ be a Lorentzian surface in $\mathbb L^{3}_{1}(c)$ with the local parametrization $\tilde\phi(u,v)$ such that \eqref{SubsectTisLLProp1Eq2} and  \eqref{SubsectTisLLProp1Eq1}. Let  $\tilde N$ and $\hat S$ denote the unit normal vector field and shape operator of $\hat M$, respectively. We are going to use the following lemma obtained by a direct computation.
\begin{lemma}\label{SectClassALem1}
		The Levi-Civita connection $\nabla^{\mathbb L^3_1(c)}$ satisfies 
\begin{align}\label{SectClassALem1Eq1}
\begin{split}
\nabla^{\mathbb L^3_1(c)}_{\partial_u}{\partial_u} &= \left(\frac{E_u}{E}-\frac{2f^{\prime}}{f}\right)\tilde{\phi}_{u}+\left(\frac{E_{u}}{E^2}-\frac{f^{\prime}}{fE}\right)\tilde{\phi}_v+h_{1}\tilde{N} \\
\nabla^{\mathbb L^3_1(c)}_{\partial_u}{\partial_v} &= h_{2}\tilde{N}\\
\nabla^{\mathbb L^3_1(c)}_{\partial_v}{\partial_v} &= \frac{E_v}{E}\tilde{\phi}_{v}+h_{3}\tilde{N}
\end{split}
\end{align}
for some smooth functions $h_i$. 
\end{lemma}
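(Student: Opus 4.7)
The plan is to read off the intrinsic components of the ambient Levi-Civita connection from the metric \eqref{SubsectTisLLProp1Eq2} via a direct Christoffel-symbol computation, and then invoke the Gauss formula to absorb the normal components into the scalar coefficients $h_1,h_2,h_3$ multiplying $\tilde N$. Concretely, I would first record the metric matrix
\[
[\hat g]=\frac{1}{f(u)^2}\begin{pmatrix} -1 & E \\ E & 0 \end{pmatrix},\qquad [\hat g]^{-1}=\frac{f^2}{E^2}\begin{pmatrix} 0 & E \\ E & 1 \end{pmatrix},
\]
so in particular $\hat g^{uu}=0$, $\hat g^{uv}=f^2/E$, $\hat g^{vv}=f^2/E^2$, and compute the four nontrivial partial derivatives $\partial_u g_{uu}=2f'/f^3$, $\partial_u g_{uv}=E_u/f^2-2Ef'/f^3$, $\partial_v g_{uv}=E_v/f^2$, while $\partial_v g_{uu}=\partial_\ast g_{vv}=0$.

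Next I would feed these into $\Gamma^k_{ij}=\tfrac12\hat g^{kl}(\partial_i g_{jl}+\partial_j g_{il}-\partial_l g_{ij})$ to obtain the intrinsic Christoffel symbols. Because $\hat g^{uu}=0$, the computation is short: one finds
\[
\Gamma^u_{uu}=\frac{E_u}{E}-\frac{2f'}{f},\qquad \Gamma^v_{uu}=\frac{E_u}{E^2}-\frac{f'}{fE},\qquad \Gamma^v_{vv}=\frac{E_v}{E},
\]
and $\Gamma^u_{uv}=\Gamma^v_{uv}=\Gamma^u_{vv}=0$ (the last three follow essentially from $g_{vv}=0$ together with $\partial_v g_{uv}-\partial_v g_{uv}=0$ in the mixed indices). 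The vanishing of $\Gamma^u_{uv}$ and $\Gamma^v_{uv}$ is precisely what forces the second line of \eqref{SectClassALem1Eq1} to have no tangential part.

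Finally, I would apply the Gauss formula for the isometric immersion $\tilde\phi\colon\hat M\hookrightarrow\mathbb L^3_1(c)$. Since the normal bundle of $\hat M$ is a line bundle trivialized by $\tilde N$, for each pair $(i,j)$ the normal part of $\nabla^{\mathbb L^3_1(c)}_{\partial_i}\partial_j$ is a scalar multiple of $\tilde N$; defining
\[
h_1:=\bigl\langle\nabla^{\mathbb L^3_1(c)}_{\partial_u}\partial_u,\tilde N\bigr\rangle\varepsilon,\qquad h_2:=\bigl\langle\nabla^{\mathbb L^3_1(c)}_{\partial_u}\partial_v,\tilde N\bigr\rangle\varepsilon,\qquad h_3:=\bigl\langle\nabla^{\mathbb L^3_1(c)}_{\partial_v}\partial_v,\tilde N\bigr\rangle\varepsilon
\]
with $\varepsilon=\langle\tilde N,\tilde N\rangle=\pm1$ gives smooth functions, and combining them with the tangential parts $\Gamma^k_{ij}\tilde\phi_k$ computed above yields exactly \eqref{SectClassALem1Eq1}.

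There is no real conceptual obstacle here; the only thing to be careful about is bookkeeping — in particular tracking the $-2f'/f$ and $-f'/(fE)$ contributions coming from $\partial_u g_{uv}$, and checking that the special feature $g^{uu}=0$ makes $\Gamma^u_{uv}$, $\Gamma^v_{uv}$, and $\Gamma^u_{vv}$ vanish so that the middle and last equations of the lemma take the stated form with no spurious tangential terms.
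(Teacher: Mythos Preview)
Your proposal is correct and is exactly the ``direct computation'' the paper alludes to but omits: the paper states the lemma without proof, simply noting that it is obtained by a direct computation. Your Christoffel-symbol calculation from the metric \eqref{SubsectTisLLProp1Eq2} together with the Gauss formula is precisely that computation, and your bookkeeping (in particular the use of $g^{uu}=0$ to kill the mixed tangential terms) is accurate.
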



Now, consider the surface parametrized by \eqref{SubsectTisLLProp1Eq1}. Then, by \Cref{SubsectTisLLThm1},   $\left(\frac{\partial}{\partial z}\right)^T$ is light-like on $M$. On the other hand, the vector fields
\begin{align*}
\begin{split}
T=\frac{1}{E}\partial_v=\frac{1}{E}(\tilde\phi_v,0),\quad & U=-\partial_u=-(\tilde\phi_u,1),\\
e_3=\frac{1}{E}(\tilde\phi_v,1),\quad & e_4=\frac 1f(\tilde N,0)\\
\end{split}
\end{align*}
form the frame field $\{T,U,e_3,e_4\}$ defined by \eqref{partialzDecompT-LL}. By a direct computation, we obtain
		\begin{equation} \label{SectClassAshapeoperators_new}
			A_{e_3} = \begin{pmatrix}
				-\frac{f^{\prime}}{f} & \frac{E_u}{E}-\frac{f^{\prime}}{f}\\
				0 & -\frac{f^{\prime}}{f}
			\end{pmatrix}, \quad
			A_{e_4} = \begin{pmatrix}
				\frac{fh_2}{E} & -fh_1 \\
				-\frac{fh_3}{E^{2}} & \frac{fh_2}{E}
			\end{pmatrix}.
		\end{equation}
		
Next, we get the following corollary by considering the shape operators of $M$ given by \eqref{SectClassAshapeoperators_new}.
\begin{corry}\label{SectClassACorr1}
Let $M$  be a surface in $\mathbb L^3_1(c)\,{}_f\times I$  given in \Cref{SubsectTisLLProp1} with light-like  $\left(\frac{\partial}{\partial z}\right)^T$. Then, the followings are equivalent to each other:
\begin{enumerate}
\item[(i)] $M$ is a class $\mathcal{A}$ surface, 
\item[(ii)] $h_3=0$,
\item[(iii)] $\partial_v$ is a principal direction of $\hat M$.
\end{enumerate}
\end{corry}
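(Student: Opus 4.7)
The plan is to handle the two equivalences (i)$\Leftrightarrow$(ii) and (ii)$\Leftrightarrow$(iii) independently, each by a short direct computation; no further machinery beyond the matrices already written in \eqref{SectClassAshapeoperators_new} and the relation \eqref{SectClassALem1Eq1} is required.

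For (i)$\Leftrightarrow$(ii), I would read off the first columns of the matrices in \eqref{SectClassAshapeoperators_new}. Since in the frame $\{T,U\}$ the vector $T$ is represented by $(1,0)^{t}$, these columns give
\[
A_{e_3} T = -\frac{f'}{f}\,T, \qquad A_{e_4} T = \frac{f h_2}{E}\,T \;-\; \frac{f h_3}{E^{2}}\,U.
\]
Hence $T$ is automatically an eigenvector of $A_{e_3}$, and the class $\mathcal A$ condition (that $T$ be an eigenvector of $A_\xi$ for \emph{every} normal $\xi$) reduces to requiring $T$ to be an eigenvector of $A_{e_4}$. Since $T$ and $U$ are linearly independent and $f,E$ never vanish, this happens if and only if $h_3 = 0$.

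For (ii)$\Leftrightarrow$(iii), I would translate $h_3$ into an invariant of $\hat M \hookrightarrow \mathbb L^3_1(c)$. The normal part of the third identity in \eqref{SectClassALem1Eq1} gives $h^{\hat M}(\partial_v,\partial_v) = h_3\,\tilde N$, so \eqref{hArelby} yields
\[
\hat g\bigl(\hat S \partial_v,\partial_v\bigr) = \bigl\langle h^{\hat M}(\partial_v,\partial_v),\tilde N \bigr\rangle = \epsilon\, h_3, \qquad \epsilon := \langle \tilde N,\tilde N\rangle \in \{\pm 1\}.
\]
On the other hand, writing $\hat S\partial_v = \alpha\,\partial_u + \beta\,\partial_v$ in the coordinate basis and using the metric \eqref{SubsectTisLLProp1Eq2}, which gives $\hat g(\partial_v,\partial_v)=0$ and $\hat g(\partial_u,\partial_v)=E/f^{2}\neq 0$, I compute $\hat g(\hat S\partial_v,\partial_v) = \alpha\, E/f^{2}$. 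Equating the two expressions shows $h_3 = 0$ if and only if $\alpha = 0$, i.e., if and only if $\hat S\partial_v = \beta\,\partial_v$, which is the definition of $\partial_v$ being a principal direction of $\hat M$.

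The only subtle point is the last step: because $\partial_v$ is null, a priori $\hat g(\hat S\partial_v,\partial_v)=0$ would only force $\hat S\partial_v$ to be $\hat g$-orthogonal to $\partial_v$, not proportional to it. This potential obstacle is disposed of by the non-null companion $\partial_u$ in the coordinate basis: since $\hat g(\partial_u,\partial_v)\neq 0$, $\hat g$-orthogonality to $\partial_v$ along the basis $\{\partial_u,\partial_v\}$ forces the $\partial_u$-coefficient to vanish, hence coincides with proportionality to $\partial_v$. With this observation the chain (i)$\Leftrightarrow$(ii)$\Leftrightarrow$(iii) is complete.
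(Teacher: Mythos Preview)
Your proof is correct and follows essentially the same route as the paper: for (i)$\Leftrightarrow$(ii) both you and the paper read off the first column of \eqref{SectClassAshapeoperators_new}, and for (ii)$\Leftrightarrow$(iii) both use the third line of \eqref{SectClassALem1Eq1} to identify $h_3$ with the $\tilde N$-component of $h^{\hat M}(\partial_v,\partial_v)$ and then convert this into the statement that $\hat S\partial_v$ is proportional to $\partial_v$. Your discussion of the null-vector subtlety (why $\hat g(\hat S\partial_v,\partial_v)=0$ actually forces the $\partial_u$-coefficient to vanish) is more explicit than the paper's, which simply asserts the equivalent eigenvalue equation $\hat S(\partial_v)=\frac{Eh_2}{f^2}\partial_v$ without spelling out this step.
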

\begin{proof}
\textbf{$(i)\Leftrightarrow (ii)$}	Because of the first equation of \eqref{SectClassAshapeoperators_new}, we have $A_{e_3}T=-\frac{f^{\prime}}{f}$, that is $T$ is a principal direction of $A_{e_3}$. Therefore, \eqref{SectClassALem1Eq1} and the second equation of \eqref{SectClassAshapeoperators_new} imply that 	$M$  is a $\mathcal{A}$ surface if and only if $h_3=0$.

\textbf{$(ii)\Leftrightarrow (iii)$} By the definition of $h_3$, we have $h_3=0$ is satisfied if and only if
$$g_{c}\left(\nabla^{\mathbb L^3_1(c)}_{\partial_v}{\partial_v},\tilde{N}\right)=0.$$
which is equivalent to 
$$\hat S(\partial_v)=\frac{Eh_2}{f^2}\partial_v.$$
\end{proof}

Now, we are ready to prove the main result of this paper.
\begin{theorem}\label{L31cfI-ClassA-MainThm}
Let $M$ be a surface in the space-time $\mathbb L^3_1(c)\, {}_f\times I$ such that $\left(\frac{\partial}{\partial z}\right)^T$ is light-like. Then, $M$ is a class~$\mathcal A$  surface if and only if for all $p\in M$ there exists a neighborhood $\mathcal N_p$ given by one of following five cases:
\begin{enumerate}
\item[(i)] $\mathcal N_p$ is congruent to the surface given in  \Cref{SubsectTisLLProp1L31c},
\item[(ii)] $c=1$ and $\mathcal N_p$ is congruent to the surface given in  \Cref{SubsectTisLLProp1S31},
\item[(iii)] $c=-1$ and $\mathcal N_p$ is congruent to the surface given in  \Cref{SubsectTisLLProp1H31},
\item[(iv)] $c=-1$ and $\mathcal N_p$ is congruent to the surface given in  \Cref{SubsectTisLLProp1H31Quad},
\item[(v)] $c=0$ and $\mathcal N_p$ is congruent to the surface given in  \Cref{SubsectTisLLProp1E31}.
\end{enumerate}
\end{theorem}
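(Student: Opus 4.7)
The proof will split along the equivalence of Corollary \ref{SectClassACorr1}: by Theorem \ref{SubsectTisLLThm1}, any such $M$ is locally parametrized by \eqref{SubsectTisLLProp1Eq1} with a generating Lorentzian surface $\hat M\subset\mathbb L^3_1(c)$ whose induced metric has the form \eqref{SubsectTisLLProp1Eq2}, and $M$ is class $\mathcal A$ if and only if the null vector $\partial_v$ is a principal direction of $\hat M$. Since each of the five example families (i)--(v) was already verified in Section~3 to be class $\mathcal A$ with light-like $T$ at the time of its construction, only the necessary direction requires proof. The task thereby reduces to classifying Lorentzian surfaces $\hat M\subset\mathbb L^3_1(c)$ admitting a null principal direction.

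The main algebraic step is an inspection of the canonical forms \eqref{SubsectPrelL31c} of the shape operator $\hat S$. A null vector cannot be an eigenvector of form I with distinct eigenvalues (whose eigenvectors form an orthonormal frame and are therefore spacelike/timelike) nor of form II (which has no real eigenvectors at all). Hence either $\hat S=\lambda I$, so that $\hat M$ is totally umbilical and every direction, including the null one, is principal, or $\hat S$ is of type III, in which case $\hat M$ is a null scroll as in Example \ref{L31cNullScroll}. The umbilical branch is then completely described by Remark \ref{L31cRemarkTotallUmb}, producing the four families corresponding to items (ii)--(v) according to $c$ and to the flat/non-flat distinction for $c=-1$; the null scroll branch gives item (i).

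What remains is to verify, in each of the five branches, that $\hat M$ can be reparametrized in coordinates $(u,v)$ in which the metric acquires the form \eqref{SubsectTisLLProp1Eq2} and, where needed, $\partial_v$ aligns with the prescribed null principal direction, and then to match the resulting parametrization with the one asserted in the corresponding example. For branches (ii), (iii), (v) I would start from the standard parametrizations \eqref{S31TotUmbParam1}, \eqref{H31TotUmbParam1}, \eqref{E31TotUmbParam1} and introduce a pair of auxiliary functions $A(u,v),\ a(u)$; imposing \eqref{SubsectTisLLProp1Eq2} collapses to the single ODE \eqref{L31cTotUmbParamAsatisfies}, and the conclusion matches Examples \ref{SubsectTisLLProp1S31}, \ref{SubsectTisLLProp1H31}, \ref{SubsectTisLLProp1E31}. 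For branch (iv), the flat umbilical surface \eqref{H31TotUmbQuadraParam1} is already given in null coordinates $(U,V)$, and Remark \ref{L31fxIFlatNBPropRem} supplies the explicit change of variables $U=U(u,v),\ V=V(u)$ that converts it into the form stated in Example \ref{SubsectTisLLProp1H31Quad}. For branch (i), one reparametrizes the null scroll \eqref{L31c-NullScrollParam} via $U=U(u)$, $V=V(u,v)$, which automatically places $\partial_v$ in the null ruling spanned by $B(U)$; the normalization \eqref{SubsectTisLLProp1Eq2} then reduces to the single ODE \eqref{L31cParamAsatisfies}, recovering Example \ref{SubsectTisLLProp1L31c}.

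The principal obstacle is this reparametrization step: the joint constraints of the metric form \eqref{SubsectTisLLProp1Eq2}, the nullity of $\partial_v$, and the principal-direction condition form an \emph{a priori} overdetermined system whose local solvability must be checked branch by branch, and one must further confirm that every local solution is captured by the displayed parametric expressions rather than by some wider family. Once these consistency checks and parametric identifications are completed case by case, the theorem follows from the dichotomy established in the second paragraph.
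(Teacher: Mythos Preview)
Your proposal is correct and follows essentially the same route as the paper: reduce via \Cref{SubsectTisLLThm1} and \Cref{SectClassACorr1} to the condition that the null vector $\partial_v$ is a principal direction of $\hat M$, use the canonical forms \eqref{SubSectPrelL31c} to split into the totally umbilical case (handled by \Cref{L31cRemarkTotallUmb}) and the null-scroll case, and then carry out the reparametrization branch by branch. The paper's proof differs only in presentation---for instance, in the null-scroll branch it starts with general $U(u,v),V(u,v)$ and derives $U_v=0$ from the uniqueness of the eigenvector $\partial_V$ rather than imposing $U=U(u)$ as an ansatz, and in the umbilical branches it records an additional solution \eqref{L31cfI-ClassA-MainThmProCase1aEq5} of the null-$\partial_v$ condition that turns out to be congruent to the main one---but the logical skeleton is identical.
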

\begin{proof}
Let $p\in M$. Then, since $M$ has light-like $\left(\frac{\partial}{\partial z}\right)^T$, \Cref{SubsectTisLLThm1} implies the existence of a local coordinate system  $\left(\mathcal N_p, \left(u,v\right)\right)$ near to $p$ such that $\mathcal N_p$ is parametrized by \eqref{SubsectTisLLProp1Eq1} for a Lorentzian surface $\hat M$  with the metric tensor \eqref{SubsectTisLLProp1Eq2}. Now, in order to prove the necessary condition, assume that $\mathcal N_p$ is class $\mathcal A$. Then \Cref{SectClassACorr1} implies that the shape operator $\hat S$ of $\hat M$ has a light-like eigenvector $\partial_v$. Therefore, $\hat S$ has the canonical forms  given in case I or III of \eqref{SubSectPrelL31c}.

\noindent\textit{ Case 1}. $\hat S$ is diagonalizable. In this case,  there exists an orthonormal frame field of the tangent bundle of $\hat M$ such that $\hat S$ has the form given in case I  of \eqref{SubSectPrelL31c} for some $\lambda_1,\lambda_2$. Since $\partial_v$ is a light-like eigenvector, we have $\lambda_1=\lambda_2$, i.e, $\hat M$ is (totally) umbilical. By \Cref{L31cRemarkTotallUmb}, we have four subcases.

\noindent\textit{ Case 1.(a)}. $c=1$ and $\hat M\subset \mathbb S^2_1(\frac 1{r^2})\subset \mathbb S^3_1$ for a constant $0<r<1$. In this case, $\mathcal N_p$ can be parametrized by
\begin{align}\label{L31cfI-ClassA-MainThmProCase1aEq1}
\begin{split}
\phi(u,v)=&\Big(\frac{r}{s_1(u,v)},r\left(\frac{ \cos s_2(u,v)}{s_1(u,v)}- \sin s_2(u,v)\right),r \left(\frac{\sin s_2(u,v)}{s_1(u,v)}+\cos s_2(u,v)\right),\\&
\sqrt{1-r^2},u\Big)
\end{split}
\end{align}
for some smooth functions $s_1,s_2$. Since $\partial_u$ and $\partial_v$ are light-like vectors, \eqref{L31cfI-ClassA-MainThmProCase1aEq1} implies
\begin{eqnarray}
\label{L31cfI-ClassA-MainThmProCase1aEq2} \frac{\partial s_2}{\partial v}\left(2\frac{\partial s_1}{\partial v} +\left(s_1{}^2+1\right)\frac{\partial s_2}{\partial v} \right)&=&0,\\
\label{L31cfI-ClassA-MainThmProCase1aEq3} s_1 {}^2+r^2 f ^2 \left(s_1 {}^2+1\right)  \left(\frac{\partial s_2}{\partial u}\right)^2+2 r^2 f ^2  \frac{\partial s_1}{\partial u}   \frac{\partial s_2}{\partial u}&=&0.
\end{eqnarray}
Because of \eqref{L31cfI-ClassA-MainThmProCase1aEq2}, we have either 
\begin{equation}
\label{L31cfI-ClassA-MainThmProCase1aEq4} s_2(u,v)=a(u)
\end{equation}
or
\begin{equation}
\label{L31cfI-ClassA-MainThmProCase1aEq5} s_2(u,v)=a(u)-2 \tan ^{-1}\left(s_1(u,v)\right)
\end{equation}
for a smooth function $a$. 

Let $s_2$ satisfy \eqref{L31cfI-ClassA-MainThmProCase1aEq4}. In this case,  \eqref{L31cfI-ClassA-MainThmProCase1aEq3} implies 
\begin{equation}
\label{L31cfI-ClassA-MainThmProCase1aEq4-1} s_1(u,v)=A(u,v)
\end{equation}
for a function $A$ satisfying \eqref{L31cTotUmbParamAsatisfies}. By combining \eqref{L31cfI-ClassA-MainThmProCase1aEq4}  and \eqref{L31cfI-ClassA-MainThmProCase1aEq4-1} with \eqref{L31cfI-ClassA-MainThmProCase1aEq1}, we obtain \eqref{S31TotUmbParam2}. Therefore, we have the case (ii) of the theorem.

Note that the other case \eqref{L31cfI-ClassA-MainThmProCase1aEq5} results in a surface congruent to the one given in \Cref{SubsectTisLLProp1S31}.


\noindent\textit{ Case 1.(b)}. $c=-1$ and $\hat M\subset \mathbb H^2_1(-\frac 1{r^2})\subset \mathbb H^3_1$  for a constant $r>1$.   In this case, similar to case 1.(b), we start with the parametrization of  $\mathcal N_p$ given by
\begin{align*}
\begin{split}
\phi(u,v)=&\Big(r \left(\frac{\cos s_2(u,v)}{s_1(u,v)}-\sin s_2(u,v)\right),r \left(\frac{\sin s_2(u,v)}{s_1(u,v)}+\cos s_2(u,v)\right),\frac{r}{s_1(u,v)},\\&\sqrt{r^2-1},u\Big)
\end{split}
\end{align*}
for some smooth functions $s_1,s_2$ and obtain \eqref{L31cfI-ClassA-MainThmProCase1aEq2} together with
\begin{eqnarray*}
s_1 {}^2-r^2 f ^2 \left(s_1 {}^2+1\right)  \left(\frac{\partial s_2}{\partial u}\right)^2-2 r^2 f ^2  \frac{\partial s_1}{\partial u}   \frac{\partial s_2}{\partial u}&=&0.
\end{eqnarray*}
Then, we get the surface parametrized by \eqref{H31TotUmbParam2}. Thus, we have the case (iii) of the theorem.


\noindent\textit{ Case 1.(c)}. $c=-1$ and $\hat M\subset \mathbb H^3_1$  is  the flat surface parametrized by \eqref{H31TotUmbQuadraParam1}. Note that the parameters $U,V$ in \eqref{H31TotUmbQuadraParam1} satisfy \eqref{L31c-isothermal-null}. Therefore, because of \Cref{L31fxIFlatNBPropRem}, the coordinates $u,v$ satisfy \eqref{L31fxIFlatNBPropRemEq1}. By combining  \eqref{H31TotUmbQuadraParam1}, \eqref{L31fxIFlatNBPropRemEq1} with \eqref{SubsectTisLLProp1Eq1}, we obtain the surface constructed in \Cref{SubsectTisLLProp1H31Quad}. Therefore, we have the case (iv) of the theorem.

\noindent\textit{ Case 1.(d)}. $c=0$ and $\hat M\subset \mathbb S^2_1(\frac 1{r^2})\subset \mathbb E^3_1$. In this case, by a similar way to case 1.(b), we get the case (v) of the theorem.


\noindent\textit{ Case 2}. $\hat S$ is non-diagonalizable. In this case,  there exists a pseudo-orthonormal frame field of the tangent bundle of $\hat M$ such that $\hat S$ has the form given in case III  of \eqref{SubSectPrelL31c}. In this case, $\hat M$ is a null scroll  described in \Cref{L31cNullScroll} for some functions $a$ and $b$ and it can be parametrized as given in \eqref{L31c-NullScrollParam}.

By considering \eqref{L31c-NullScrollParam} and  \eqref{SubsectTisLLProp1Eq1}, we obtain \eqref{L31cTotUmbParam2} for some smooth functions $U(u,v)$ and $V(u,v)$. Since $\partial_u$ and $\partial_v$ are light-like vectors, \eqref{L31cfI-ClassA-MainThmProCase1aEq1} implies
\begin{eqnarray}
\label{L31cfI-ClassA-MainThmProCase2Eq2} \frac{\partial U}{\partial v} \left( V^2 \left(b(U)^2+c\right)\frac{\partial U}{\partial v}-2 \frac{\partial V}{\partial v}\right)&=&0,\\
\label{L31cfI-ClassA-MainThmProCase3Eq3} f^2 \frac{\partial U}{\partial u} \left( V^2 \left(b(U)^2+c\right)\frac{\partial U}{\partial u}-2 \frac{\partial V}{\partial u}\right)+1&=&0.
\end{eqnarray} 
 Because $\partial_V$ is the only eigenvector of the shape operator of $\hat M$ (See \eqref{L31c-NullScrollParamShapOp}), the vector field $\partial_v$ must be proportional to $\partial_V$. Therefore, \eqref{L31cfI-ClassA-MainThmProCase2Eq2} implies 
\begin{eqnarray} \label{L31cfI-ClassA-MainThmProCase2Eq4}
U=U(u).
\end{eqnarray}
Consequently, \eqref{L31cfI-ClassA-MainThmProCase3Eq3} turns into \eqref{L31cParamAsatisfies}. By combining \eqref{L31cParamAsatisfies} and \eqref{L31cfI-ClassA-MainThmProCase2Eq4} with \eqref{L31cTotUmbParam2}, we get the surface constructed in \Cref{SubsectTisLLProp1L31c}. Therefore, we have case (i) of the theorem. Hence the proof of necessary condition is completed. The proof of the sufficient condition is obtained in the previous subsection.
\end{proof}


\section{Applications of  Class $\mathcal A$ Surfaces}
In this section, as applications of \Cref{L31cfI-ClassA-MainThm} we obtain local clasification theorems on some important classes of surfaces in the space-time $\mathbb L^3_1(c)\, {}_f\times I$ with the property of of having light-like $\left(\frac{\partial}{\partial z}\right)^T$.

Let $M$  be the surface in $\mathbb L^3_1(c)\,{}_f\times I$ given by \Cref{SubsectTisLLProp1}. First, we obtain the following corollaries directly follows from \eqref{SectClassAshapeoperators_new}.
	\begin{corry}\label{SectClassACorr2}
		$M$  is a pseudo-umbilical surface of  $\mathbb L^3_1(c)\,{}_f\times I$ if and only if  the equations
		\begin{equation} \label{SectClassACorr2Eq1}
		 h_2 h_3=0, \qquad h_1 h_2 f^4-E_u f' f+E f'{}^2 =0
		\end{equation}
are satisfied.		
\end{corry}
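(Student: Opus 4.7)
The approach is to write out the pseudo-umbilicity condition $A_H = a_H I$ explicitly using the matrix representations of $A_{e_3}$ and $A_{e_4}$ in the pseudo-orthonormal frame $\{T,U\}$ provided by \eqref{SectClassAshapeoperators_new}, and then extract the two scalar conditions by examining when this matrix is a scalar multiple of the identity.

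First I would compute the mean curvature vector $H$. Because the induced metric matrix in the frame $\{T,U\}$ coincides with its own inverse and has vanishing diagonal entries, the trace of $h$ reduces to $-2h(T,U)$, so $H=-h(T,U)$. Reading the $e_3$- and $e_4$-components of $h(T,U)$ from \eqref{SubsectTisLL2ndFun2}, after identifying $h^4_{12}=-fh_2/E$ through comparison of \eqref{SubsectTisLLShape1} with \eqref{SectClassAshapeoperators_new}, I obtain $H=\tfrac{f'}{f}e_3+\tfrac{fh_2}{E}e_4$.

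Second, I would form $A_H=\tfrac{f'}{f}A_{e_3}+\tfrac{fh_2}{E}A_{e_4}$ as a $2\times 2$ matrix in the $\{T,U\}$ basis. The key structural observation is that both $A_{e_3}$ and $A_{e_4}$ already have equal diagonal entries in the pseudo-orthonormal frame, so $A_H$ inherits this property automatically; hence the condition $A_H=a_HI$ collapses to the vanishing of its two off-diagonal entries. Since the $(2,1)$ entry of $A_{e_3}$ is zero, the $(2,1)$ condition involves only $A_{e_4}$ and immediately produces $h_2 h_3 = 0$. The $(1,2)$ condition, after clearing denominators by multiplying through by $-f^2E$, yields $h_1 h_2 f^4 - E_u f' f + E f'^2 = 0$, which is precisely the second equation in \eqref{SectClassACorr2Eq1}.

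No step presents a serious obstacle, as the verification is a direct matrix computation. The only point requiring care is the trace normalization in the pseudo-orthonormal frame: the off-diagonal entries of the inverse metric force $\mathrm{tr}\,h = -2h(T,U)$ rather than a sum over diagonal contributions as in the orthonormal case, and this is what determines the precise coefficients of $e_3$ and $e_4$ in $H$ and hence the exact form of the two displayed equations.
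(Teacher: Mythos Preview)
Your proposal is correct and follows essentially the same route as the paper: both compute $A_H$ explicitly from \eqref{SectClassAshapeoperators_new}, observe that the diagonal entries coincide automatically, and read off the two conditions from the vanishing of the off-diagonal entries. Your added explanation of why $H=-h(T,U)$ in the pseudo-orthonormal frame and why the diagonals agree is a helpful unpacking of what the paper records as a single ``direct computation.''
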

\begin{proof}
By a direct computation using  \eqref{SectClassAshapeoperators_new}, we obtain
$$A_H=
\begin{pmatrix}
 \frac{f^2 h_2^2}{E^2}+\frac{f'{}^2}{f^2} & -\frac{h_1 h_2 f^4-E_u f' f+E f'{}^2}{E f^2} \\
 -\frac{f^2 h_2 h_3}{E^3} & \frac{f^2 h_2^2}{E^2}+\frac{f'{}^2}{f^2} \\
\end{pmatrix}
$$
which yields that $M$  is  pseudo-umbilical if and only if \eqref{SectClassACorr2Eq1} is satisfied.
\end{proof}

	\begin{corry}\label{SectClassACorr3}
		$M$  is a totally umbilical surface of  $\mathbb L^3_1(c)\,{}_f\times I$ if and only  the functions $E$, $h_1$ and $h_3$ satisfy
		\begin{equation} \label{SectClassACorr3Eq1}
		 \frac{f^{\prime}}{f}-\frac{E_u}{E}=h_1=h_3=0.
		\end{equation}
\end{corry}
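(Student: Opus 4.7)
The plan is to read off the corollary directly from the matrix representations of the shape operators given in \eqref{SectClassAshapeoperators_new}. Recall that $M$ is totally umbilical if and only if, for every normal vector field $\xi$, the shape operator $A_\xi$ is a scalar multiple of the identity. Since $\xi\mapsto A_\xi$ is tensorial and linear in $\xi$, it suffices to check this on the generators $e_3$ and $e_4$ of the normal bundle. Crucially, the identity map has the same matrix representation in every basis, so even though $\{T,U\}$ is only pseudo-orthonormal, the condition ``$A_\xi$ is proportional to the identity'' is equivalent to the matrix in \eqref{SectClassAshapeoperators_new} being a multiple of the $2\times 2$ identity matrix.

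First, I will examine $A_{e_3}$. From \eqref{SectClassAshapeoperators_new}, both diagonal entries equal $-\tfrac{f'}{f}$, so $A_{e_3}$ is already a multiple of the identity exactly when its single non-trivial off-diagonal entry vanishes, i.e., when
\[
\frac{E_u}{E}-\frac{f'}{f}=0.
\]
This gives the first condition in \eqref{SectClassACorr3Eq1}. Observe that once this equation holds, $A_{e_3}=-\tfrac{f'}{f}\,\mathrm{Id}$, which is consistent with $e_3$ being the normal component of $\partial/\partial z$ and with the formulae in \Cref{SubsectTisLLLemma1}.

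Next, I will examine $A_{e_4}$. Again its diagonal entries coincide (both equal $\tfrac{fh_2}{E}$), so $A_{e_4}$ is a multiple of the identity precisely when both off-diagonal entries vanish. Since $f$ and $E$ are non-vanishing by hypothesis, the conditions $-fh_1=0$ and $-\tfrac{fh_3}{E^2}=0$ reduce simply to $h_1=0$ and $h_3=0$. Combining with the earlier condition yields \eqref{SectClassACorr3Eq1}, and conversely, \eqref{SectClassACorr3Eq1} forces both shape operators to be scalar multiples of the identity, hence $M$ is totally umbilical.

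I do not anticipate any real obstacle, as the argument reduces entirely to inspecting the already-computed matrices in \eqref{SectClassAshapeoperators_new}; the only point requiring a line of justification is the basis-independence remark ensuring that ``$A_\xi$ proportional to identity'' can be checked directly on the pseudo-orthonormal matrix representation.
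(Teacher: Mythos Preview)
Your proof is correct and follows exactly the same approach as the paper, which simply states that the result follows directly from \eqref{SectClassAshapeoperators_new}. You have merely spelled out the details of that inspection, including the (valid) observation that proportionality to the identity is basis-independent.
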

\begin{proof}
The proof directly follows from \eqref{SectClassAshapeoperators_new}.
\end{proof}

	\begin{corry}\label{SectClassACorr4}
		$M$  has flat normal bundle in  $\mathbb L^3_1(c)\,{}_f\times I$ if and only  the functions $E$ and $h_3$ satisfy
		\begin{equation} \label{SectClassACorr3Eq4}
		 h_3\left(\frac{f^{\prime}}{f}-\frac{E_u}{E}\right)=0
		\end{equation}
\end{corry}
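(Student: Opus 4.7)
The plan is to combine the Ricci equation \eqref{Ricci} with a direct matrix computation based on \eqref{SectClassAshapeoperators_new}. Since both the tangent and normal bundles of $M$ have rank two, the normal curvature $R^\perp$ is determined by the single scalar $\langle R^\perp(T,U)e_3,e_4\rangle$. Substituting $X=T$, $Y=U$, $\xi=e_3$ into \eqref{Ricci}, pairing with $e_4$, and using \eqref{hArelby} together with the self-adjointness of $A_{e_3}$, one obtains
\[
\langle R^\perp(T,U)e_3,e_4\rangle = \langle \tilde R(T,U)e_3,e_4\rangle + \langle [A_{e_3},A_{e_4}]T,U\rangle,
\]
so the flat-normal-bundle condition reduces to the vanishing of this right-hand side.

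Next I would show that the ambient term $\langle \tilde R(T,U)e_3,e_4\rangle$ vanishes identically. Writing $e_3=\frac{\partial}{\partial z}-T$ via \eqref{partialzDecompT-LL} and applying \eqref{L31cxfILCconn} twice, a short calculation yields $\tilde R(T,U)\frac{\partial}{\partial z}=\frac{f''}{f}T$, which is tangent to $M$ and hence orthogonal to $e_4$. The remaining contribution is $\langle \tilde R(T,U)T,e_4\rangle$; by the pair symmetry $\tilde R(X,Y,Z,W)=\tilde R(Z,W,X,Y)$ this equals $\langle \tilde R(T,e_4)T,U\rangle$, and since $T$ and $e_4$ are both vertical (tangent to the fiber $\mathbb{L}^3_1(c)$), the warped-product curvature formula for purely vertical arguments expresses $\tilde R(T,e_4)T$ as a linear combination of $T$ and $e_4$ with coefficients proportional to $\tilde g(T,T)=0$ and $\tilde g(T,e_4)=0$; it therefore vanishes.

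Finally, I would read $[A_{e_3},A_{e_4}]$ off from \eqref{SectClassAshapeoperators_new}: writing each shape operator as a scalar multiple of the identity plus an off-diagonal remainder, the identity parts drop out of the commutator, and a short matrix multiplication gives
\[
[A_{e_3},A_{e_4}] = \frac{fh_3}{E^2}\Big(\frac{f'}{f}-\frac{E_u}{E}\Big)\diag(1,-1)
\]
in the basis $\{T,U\}$. Pairing with $U$ yields $\langle [A_{e_3},A_{e_4}]T,U\rangle=-\frac{fh_3}{E^2}\big(\frac{f'}{f}-\frac{E_u}{E}\big)$, which vanishes precisely when \eqref{SectClassACorr3Eq4} holds. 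The main obstacle I expect is the warped-product cancellation in the second step; once that vanishing is secured, the remainder is pure linear algebra.
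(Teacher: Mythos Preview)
Your proposal is correct and follows the same route as the paper, which simply states that the result ``directly follows from \eqref{SectClassAshapeoperators_new} and the Ricci equation \eqref{Ricci}'' without spelling out the vanishing of the ambient term or the commutator computation; you have supplied exactly those missing details. One small point of exposition: in the step where you argue that $\tilde R(T,e_4)T$ is a linear combination of $T$ and $e_4$ with coefficients $\tilde g(T,T)$ and $\tilde g(T,e_4)$, the warped-product formula alone only gives this for the correction term, while the $R^F$ piece requires the additional fact that the fiber $\mathbb L^3_1(c)$ has constant sectional curvature---you should make that explicit when writing up the argument.
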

\begin{proof}
The proof directly follows from \eqref{SectClassAshapeoperators_new} and the Ricci equation \eqref{Ricci}.
\end{proof}


\subsection{Surfaces with Flat Normal Bundle}
In this subsection, as a result of \Cref{SectClassACorr4}, we obtain the following corollary.
\begin{proposition}
Let $M$ be a Lorentzian surface in the static space-time $\mathbb L^3_1(c)\,{}_f\times I$ with light-like $\left(\frac{\partial}{\partial z}\right)^T$. Then $M$ has flat normal bundle if and only if it is locally congruent to one of the following class of surfaces:
\begin{enumerate}
\item[(i)] A class~$\mathcal A$  surface given in \Cref{L31cfI-ClassA-MainThm},
\item[(ii)] A surface which can be parametrized by \eqref{SubsectTisLLProp1Eq1} given in \Cref{SubsectTisLLProp1}, where $\tilde{\phi}(u,v)$ is a local parametrization of a flat Lorentzian surface $\hat M$ in $\mathbb L^{3}_{1}(c)$ with the induced metric given by \eqref{L31fxIFlatNBPropCaseEqIIMetric}.
\end{enumerate}
\end{proposition}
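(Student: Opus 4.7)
The plan is to use \Cref{SectClassACorr4} as the structural starting point: $M$ has flat normal bundle if and only if the product
$$h_3\left(\frac{f'}{f}-\frac{E_u}{E}\right)=0$$
holds, so the analysis splits into the two cases corresponding to the vanishing of each factor. The hope is that one factor isolates the class $\mathcal A$ surfaces of \Cref{L31cfI-ClassA-MainThm} and the other carves out precisely the surfaces of type (ii).

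First I would handle the case $h_3=0$. By \Cref{SectClassACorr1} this is equivalent to $M$ being of class $\mathcal A$, and then \Cref{L31cfI-ClassA-MainThm} immediately identifies $M$ locally with one of the five listed examples, giving case (i) of the proposition. This step is essentially bookkeeping once the corollaries are in place.

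The more delicate case is $f'/f=E_u/E$. Integrating with respect to $u$ yields $E(u,v)=f(u)\,g(v)$ for some non-vanishing smooth function $g$. The idea is to absorb $g$ by reparametrizing $v$: set $\tilde v=\int g(v)\,dv$, which is a diffeomorphism of the $v$-coordinate. Because the parametrization \eqref{SubsectTisLLProp1Eq1} depends on $v$ only through $\tilde\phi(u,v)$ (its fourth component is $u$), the form \eqref{SubsectTisLLProp1Eq1} is preserved under this change, and the induced metric on $\hat M$ becomes
$$\hat g=-\tfrac{1}{f(u)^2}du^2+\tfrac{1}{f(u)}(du\,d\tilde v+d\tilde v\,du),$$
which is exactly \eqref{L31fxIFlatNBPropCaseEqIIMetric}. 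To then conclude $\hat M$ is flat, I would substitute $\bar u=F(u)=\int d\xi/f(\xi)$; this turns the metric into the constant-coefficient form $-d\bar u^{2}+(d\bar u\,d\tilde v+d\tilde v\,d\bar u)$, which has vanishing Gaussian curvature. Thus case (ii) of the proposition is obtained.

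For the converse direction both branches are short: a class $\mathcal A$ surface satisfies $h_3=0$ by \Cref{SectClassACorr1}, while a surface as in (ii) satisfies $E=f$ and hence $E_u/E=f'/f$; in either case \Cref{SectClassACorr4} yields $R^\perp=0$. The main obstacle I anticipate is the justification in the second case that the reparametrization $v\mapsto\tilde v$ really produces a parametrization of the type described in \Cref{SubsectTisLLProp1} with the \emph{specific} metric \eqref{L31fxIFlatNBPropCaseEqIIMetric}, and that the resulting $\hat M$ genuinely is flat as a subsurface of $\mathbb L^3_1(c)$ rather than merely having an abstractly flat metric. The $\bar u=F(u)$ substitution is what closes this gap cleanly, so no appeal to the existence of global null isothermal coordinates from \Cref{L31fxIFlatNBPropRem} is actually needed at this stage.
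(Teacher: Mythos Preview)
Your proposal is correct and follows essentially the same route as the paper: invoke \Cref{SectClassACorr4} to split on the vanishing of $h_3$ versus $f'/f-E_u/E$, identify the first branch with class~$\mathcal A$ via \Cref{SectClassACorr1} and \Cref{L31cfI-ClassA-MainThm}, and in the second branch integrate to $E=f(u)g(v)$ and absorb $g$ by reparametrizing $v$ to reach \eqref{L31fxIFlatNBPropCaseEqIIMetric}. Your extra step of substituting $\bar u=F(u)$ to verify flatness is more explicit than the paper's one-line assertion, and your converse is spelled out where the paper simply says ``direct computation''; the only place to tighten slightly is the case split itself, which should be phrased (as the paper does) as ``either $h_3\equiv0$ near $p$, or $h_3\neq0$ at some nearby point and hence on a shrunk neighborhood the other factor vanishes,'' rather than as two parallel global cases.
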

\begin{proof}
Let $p\in M$. Since $M$ has light-like $\left(\frac{\partial}{\partial z}\right)^T$, \Cref{SubsectTisLLThm1} implies that $p$ has a neighborhood $\mathcal N_p$ parametrized by \eqref{SubsectTisLLProp1Eq1} given in \Cref{SubsectTisLLProp1}. 

Now, in order to prove the necessary condition, we assume that $M$ has flat normal bundle. Then, because of \Cref{SectClassACorr4},  we have two cases: If $h_3=0$ on $\mathcal N_p$, then \Cref{SectClassACorr1} implies that $\mathcal N_p$ is class~$\mathcal A$  and we have the case (i). On the other hand, if $h_3(q)\neq0$ at $q\in \mathcal N_p$, then, by shrinking  $\mathcal N_p$, if necessary, we observe that \eqref{SectClassACorr3Eq4} implies 
$$\frac{f^{\prime}}{f}-\frac{E_u}{E}=0$$
on $\mathcal N_p$. So, there exists a non-vanishing function $c_1$ such that $E(u,v)=c_1(v)f(u)$. By re-defining $v$ properly we can assume $c_1=1$. Thus, we have \eqref{L31fxIFlatNBPropCaseEqIIMetric}. Consequently,  $\mathcal N_p$ is flat and we have case (ii) of the theorem.

Converse follows from a direct computation.
\end{proof}

Note that \Cref{L31fxIFlatNBPropRem} ensures the existence of a local coordinate system for which \eqref{L31fxIFlatNBPropCaseEqIIMetric} is satisfied.

\subsection{Pseudo-Umbilical Surfaces}
In this subsection, we consider the  pseudo-umbilical surfaces with light-like $\left(\frac{\partial}{\partial z}\right)^T$ in $\mathbb L^3_1(c)\,{}_f\times I$.

We are going to use the following lemma.
\begin{lemma}\label{L31cPseudoUmbLemma1}
Let $M$ be a class~$\mathcal A$  surface in $\mathbb L^3_1(c)\,{}_f\times I$ with  light-like $\left(\frac{\partial}{\partial z}\right)^T$. Then, $M$ is pseudo-umbilical if and only if it is congruent to the surface given in  \Cref{SubsectTisLLProp1L31c} for some $a,b,U,V$ satisfying \eqref{L31cParamAsatisfies} and
\begin{subequations}\label{L31cPseudoUmbLemma1Eq1ALL}
\begin{eqnarray}
\label{L31cPseudoUmbLemma1Eq1a} U' b'(U)b(U)-\frac{f' \left(b(U)^2+c\right)}{f}&=&0,\\
\label{L31cPseudoUmbLemma1Eq1b} a(U) b(U) U'{}^2+\frac{b(U)^2}{f^2}-\frac{f' \left(f' U'+f U''\right)}{f^2 U'}&=&0.
\end{eqnarray}
\end{subequations}
\end{lemma}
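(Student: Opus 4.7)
The plan is to apply the classification theorem \Cref{L31cfI-ClassA-MainThm}: a class $\mathcal A$ surface with light-like $\left(\frac{\partial}{\partial z}\right)^T$ is locally congruent to one of the five models (i)--(v) listed there, namely either a null-scroll-generated surface or one of the four totally-umbilical-generated surfaces. In each model the shape operators with respect to the pseudo-orthonormal frame $\{T,U\}$ are upper-triangular with equal diagonal entries as in \eqref{SubsectTisLLShape1}. Consequently, writing $H = -h(T,U)$ in terms of the normal frame $\{e_3,e_4\}$, the operator $A_H$ is also upper-triangular with equal diagonal entries, so the pseudo-umbilicity condition $A_H = \rho\,I$ reduces to the vanishing of the single off-diagonal entry of $A_H$.

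First I would rule out cases (ii)--(v). In cases (ii), (iii), and (v) the explicit shape operators in \eqref{L31cTotUmbShape} yield an off-diagonal entry for $A_H$ of the schematic form $-(f'/f)\bigl(a'/A + f'/f + a''/a'\bigr) + \kappa/f^2$, where $\kappa$ is the nonzero constant $(1-r^2)/r^2$, $(r^2-1)/r^2$, or $1/r^2$, respectively. Since $A = A(u,v)$ satisfies $A_v \neq 0$ (otherwise $E$ in \eqref{L31cTotUmbShape0} would vanish) while every other term depends only on $u$, the off-diagonal can vanish identically in $v$ only if $f'a' = 0$; since $a' \neq 0$ appears as a denominator in the constraint \eqref{L31cTotUmbParamAsatisfies}, this forces $f' \equiv 0$, and the residual equation $\kappa/f^2 = 0$ has no solutions. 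Case (iv) is immediate: substituting \eqref{H31TotUmbQuadShpOp} directly produces $A_H$ with off-diagonal $1/f^2$, which never vanishes.

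The only remaining possibility is case (i). Substituting the shape operators \eqref{L31cB-ScrollShape} into $A_H$, the off-diagonal entry expands as an affine function of $V$ whose coefficients involve only $a(U), b(U)$ and $U, U', U''$ together with $f, f'$. Equating the coefficient of $V$ to zero and using $U' \neq 0$ yields \eqref{L31cPseudoUmbLemma1Eq1a}, while the $V$-independent part gives \eqref{L31cPseudoUmbLemma1Eq1b}. The converse is a direct verification: given $a,b,U,V$ satisfying \eqref{L31cParamAsatisfies} together with \eqref{L31cPseudoUmbLemma1Eq1ALL}, the associated surface from \Cref{SubsectTisLLProp1L31c} is class $\mathcal A$ by \Cref{L31cfI-ClassA-MainThm}, and the same substitution shows $A_H$ is a scalar multiple of $I$. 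The main obstacle will be the careful bookkeeping of signs when passing between the $-h^a_{ij}$-style entries of \eqref{SubsectTisLLShape1} and the explicit matrices in \eqref{L31cB-ScrollShape} and \eqref{L31cTotUmbShape}, together with the clean separation of the $V$-dependent and $V$-independent parts in case (i).
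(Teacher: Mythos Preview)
Your proposal is correct and follows essentially the same route as the paper: invoke the classification \Cref{L31cfI-ClassA-MainThm}, eliminate the four totally-umbilical-generated models (ii)--(v) by showing the off-diagonal entry of $A_H$ cannot vanish (the paper packages this via \Cref{SectClassACorr2} and the condition \eqref{SectClassACorr2Eq1}, but the computation is the same), and in the null-scroll case (i) observe that the pseudo-umbilicity condition is affine in $V$ with $u$-only coefficients, then use $V_v\neq 0$ (from \eqref{L31cParamEDef}) to split into \eqref{L31cPseudoUmbLemma1Eq1a} and \eqref{L31cPseudoUmbLemma1Eq1b}. The only point to make explicit is that the separation in case~(i) is justified precisely because $E=-U'f^2V_v\neq 0$ forces $V$ to genuinely depend on $v$; you allude to this but should state it when writing up.
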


\begin{proof}
If  $M$ is pseudo-umbilical. Then it is locally congruent to one of five surfaces given in \Cref{L31cfI-ClassA-MainThm}. First, we are going to prove that the surface given in case (ii)-(v) can not be pseudo-umbilical. 

Towards contradiction, assume that $M$ is the surface given in \Cref{SubsectTisLLProp1S31} and it is pseudo-umbilical. Then, \eqref{SectClassACorr2Eq1} is satisfied because of \Cref{SectClassACorr2}. By a direct computation using \eqref{L31cTotUmbShape}, \eqref{L31cTotUmbShape2} and \eqref{SectClassACorr2Eq1}, we obtain
$$\frac{f(u) a'(u) f'(u)}{U(u,v)}+\frac{f(u) a''(u) f'(u)}{a'(u)}+f'(u)^2-\frac{1}{r^2}+1=0$$
which can be satisfied only if $a'(u) f'(u) U_v(u,v)=0$. However, this is not possible because of \eqref{L31cTotUmbShape0}. By a similar method, we observe that none of the surfaces constructed in \Cref{SubsectTisLLProp1H31}, \Cref{SubsectTisLLProp1H31Quad}, and \Cref{SubsectTisLLProp1E31} is pseudo-umbilical
because of \eqref{L31cTotUmbShape} and \eqref{H31TotUmbQuadShpOp}.

Consequently, $M$ is locally congruent to a surface given in  \Cref{SubsectTisLLProp1L31c} for some smooth functions $a,b,U,V,E$ satisfying  \eqref{L31cParamAsatisfies} and \eqref{L31cParamEDef}. Moreover, from \eqref{L31cB-ScrollShape} and \eqref{SectClassACorr2Eq1} we see that $M$ is pseudo-umbilical if and only if the equation
\begin{equation}\label{L31cPseudoUmbLemma1PEq1} 
VU' \left(b(U) U' b'(U)-\frac{f' \left(b(U)^2+c\right)}{f}\right)+a(U) b(U) \left(U'\right)^2+\frac{b(U)^2}{f^2}-\frac{f' \left(f' U'+f U''\right)}{f^2 U'}=0
\end{equation}
is satisfied. Note that \eqref{L31cParamEDef} implies that $V_v\neq0$. Therefore, \eqref{L31cPseudoUmbLemma1PEq1} implies  \eqref{L31cPseudoUmbLemma1Eq1ALL}.
\end{proof}


Next, we obtain the local classification of pseudo-umbilical surfaces in $\mathbb{E}^{3}_1(c)\,{}_f\times I$.
\begin{proposition}\label{E31fxIPseuUmbProp}
Let $M$ be a surface in the static space-time $\mathbb{E}^{3}_1(c)\,{}_f\times I$ with light-like $\left(\frac{\partial}{\partial z}\right)^T$. Then $M$ is pseudo-umbilical if and only if it is locally congruent to one of the following surfaces:
\begin{enumerate}
\item[(i)] $M$ is a class~$\mathcal A$  surface parametrized by 
\begin{align}\label{E31fxIPseuUmbPropCaseIParam} 
\begin{split}
\phi(u,v)=& \left(\frac{(c_1 F(u)+c_2)^3+6 (c_1 F(u)+c_2)+\frac{3 F(u)-6 v}{c_1}}{6 \sqrt{2}},\frac{1}{2} (c_1 F(u)+c_2)^2,\right.\\&\left.
\frac{(c_1 F(u)+c_2)^3-6 (c_1 F(u)+c_2)+\frac{3 F(u)-6 v}{c_1}}{6 \sqrt{2}},u\right),
\end{split}
\end{align}
\item[(ii)] $M$ is a class~$\mathcal A$  surface given in \Cref{SubsectTisLLProp1L31c} for some $a,b,U,V$ satisfying
\begin{eqnarray}
\label{E31fxIPseuUmbPropCaseIICond1} b(U)&=&c_3 f, \qquad V_u=\frac{c_3^2}{2}  f^2 U V^2+\frac{1}{2 f^2 U}\\
\label{E31fxIPseuUmbPropCaseIICond2} a(U)&=&\frac{-c_3^2 f^2 U'+f f' U''+f'^2 U'}{c_3 f^3 U'^3}
\end{eqnarray}
for a non-zero constant $c_3$.
\item[(iii)] A flat surface generated by a suitable cone in $\mathbb E^3_1$ which can be parametrized by 
\end{enumerate}
\begin{eqnarray}
\label{E31fxIPseuUmbPropCaseIII} \phi(u,v)&=&\left(F(u)-v,b_1(v),b_2(v),u\right)
\end{eqnarray}
for some smooth functions $b_1,\ b_2$ such that $b_1'{}^2+b_2'{}^2=1$.
\end{proposition}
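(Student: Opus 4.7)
The plan is to reduce $M$ to the form of \Cref{SubsectTisLLProp1} via \Cref{SubsectTisLLThm1}, apply the pseudo-umbilicity criterion of \Cref{SectClassACorr2}, and then split according to whether $h_3=0$ (the class $\mathcal A$ case) or $h_2=0,\ h_3\neq 0$. Since \Cref{SectClassACorr2} characterizes pseudo-umbilicity by $h_2h_3=0$ together with $h_1h_2f^4-E_uf'f+Ef'^2=0$, this dichotomy is exhaustive.

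In the class $\mathcal A$ case I would apply \Cref{L31cfI-ClassA-MainThm} with $c=0$, which restricts $M$ to configurations (i) (null scroll) and (v) ($\hat M\subset \mathbb S^2_1\subset \mathbb E^3_1$). The argument used in the proof of \Cref{L31cPseudoUmbLemma1}, based on \eqref{L31cTotUmbShape} and \eqref{L31cTotUmbShape0}, rules out (v). For the null scroll case the conditions \eqref{L31cPseudoUmbLemma1Eq1ALL} with $c=0$ naturally split on whether $b=0$. When $b\neq 0$, integrating \eqref{L31cPseudoUmbLemma1Eq1a} (using $U=U(u)$) yields $b(U)=c_3 f$; then \eqref{L31cPseudoUmbLemma1Eq1b} solved for $a$ and \eqref{L31cParamAsatisfies} solved for $V_u$ produce case (ii). When $b=0$, \eqref{L31cPseudoUmbLemma1Eq1b} collapses to $(fU')'=0$, so $U=c_1F+c_2$; the non-triviality hypotheses force $a\neq 0$, so \Cref{FlatMinimalB-scrolls} identifies the underlying flat null scroll with the surface \eqref{E31-FlatBScrollParam}, and solving \eqref{L31cParamAsatisfies} for $V$ together with a reparametrization in $v$ yields case (i).

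For the complementary case $h_3\neq 0$, the second equation of \Cref{SectClassACorr2} reduces to $E_u/E=f'/f$, so $E(u,v)=K(v)f(u)$, and a reparametrization in $v$ lets me take $E=f$. Since $c=0$, the ambient Levi-Civita connection in $\mathbb E^3_1$ is the directional derivative, so $h_2=0$ in \eqref{SectClassALem1Eq1} reads $\tilde\phi_{uv}=0$. This gives a translation surface $\tilde\phi(u,v)=\alpha(u)+\beta(v)$, and the metric \eqref{SubsectTisLLProp1Eq2} becomes
$$g_c(\alpha',\alpha')=-\frac{1}{f^2},\quad g_c(\beta',\beta')=0,\quad g_c(\alpha'(u),\beta'(v))=\frac{1}{f(u)}.$$
Differentiating the last identity in $v$ yields $g_c(\alpha'(u),\beta''(v))=0$, and $h_3\neq 0$ excludes $\beta''\equiv 0$. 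A dimension count on the span of $\{\alpha'(u)\}$ then shows $\alpha'$ must lie in a fixed time-like direction: otherwise the two orthogonality constraints trap $\beta'$ in a $1$-dimensional null subspace, forcing $\beta''\equiv 0$. An isometry of $\mathbb E^3_1$ aligning that direction with $(1,0,0)$ normalizes the data to $\alpha=(F,0,0)$ and $\beta=(-v,b_1,b_2)$ with $b_1'^2+b_2'^2=1$, which is case (iii).

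The main obstacle will be the dimension-counting argument in the $h_3\neq 0$ case: one must combine the light-like condition on $\beta'$, the orthogonality $g_c(\alpha',\beta'')=0$, and the fixed inner product $g_c(\alpha',\beta')=1/f$ simultaneously in order to rule out the configuration where $\alpha$ traces a curve in a $2$-plane rather than a straight line. The converse in each of the three cases reduces to a direct substitution into \eqref{SectClassAshapeoperators_new} or into the formulae in \Cref{SubsectTisLLProp1L31c}, which is routine.
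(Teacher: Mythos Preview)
Your proposal is correct and, for the class $\mathcal A$ branch, tracks the paper's argument closely (with the minor variation that in the $b=0$ subcase you read off $U=c_1F+c_2$ directly from \eqref{L31cPseudoUmbLemma1Eq1b}, whereas the paper instead invokes flatness of $\hat M$ and \Cref{L31fxIFlatNBPropRem}; note that your derivation tacitly uses $f'\neq 0$).

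The one genuine divergence is in Case II ($h_3\neq 0$). You propose to deduce that $\alpha'$ lies on a fixed time-like line by a dimension-count argument combining $g_c(\alpha'(u),\beta''(v))=0$, $g_c(\beta',\beta'')=0$, and $g_c(\alpha'(u),\beta'(v))=1/f(u)$. This works: if $\mathrm{span}\{\alpha'(u)\}$ is a Lorentzian $2$-plane $P$, then $\beta''\in P^\perp$ forces $\beta'\in P$, hence $\beta'$ lies on a fixed null line in $P$, and the separated-variables constraint $g_c(\alpha',\beta')=1/f$ then makes $\beta'$ constant, contradicting $h_3\neq 0$. The paper takes a shorter route: since $E=f$ and $h_2=0$, the Gauss equation for the flat surface $\hat M$ in $\mathbb E^3_1$ gives $h_1h_3=0$, so $h_1=0$, whence \eqref{SectClassALem1Eq1} reduces to $\tilde\phi_{uu}=-\tfrac{f'}{f}\tilde\phi_u$, which integrates immediately to $\tilde\phi_u=v_1/f$ for a constant vector $v_1$. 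Your linear-algebra argument is self-contained and avoids invoking curvature, but the paper's use of the Gauss equation replaces the whole ``main obstacle'' you flagged with a single line.
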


\begin{proof}
Let $p \in M$. Since the  vector field $\left(\frac{\partial}{\partial z}\right)^T$ is light-like on $M$, \Cref{SubsectTisLLThm1} ensures that a neighborhood $\mathcal N_p$ of $p$ which can be parametrized by \eqref{SubsectTisLLProp1Eq1} presented in \Cref{SubsectTisLLProp1} for a Lorentzian surface  $\hat M$. In order to prove the necessary condition, we assume that  $\mathcal N_p$ is pseudo-umbilical. Then, \Cref{SectClassACorr2} implies that we have two cases.

\textit{Case I.} $\mathcal N_p$ is class $A$. In this case, \Cref{L31cPseudoUmbLemma1} implies that  is congruent to the surface given in 
\Cref{SubsectTisLLProp1L31c}, for some $a,b,U,V$ satisfying \eqref{L31cParamAsatisfies} and \eqref{L31cPseudoUmbLemma1Eq1ALL} and  $\hat M$ is a null scroll. Since $c=0$, \eqref{L31cPseudoUmbLemma1Eq1a} implies the first equation in \eqref{E31fxIPseuUmbPropCaseIICond1} for a constant $c_3$. We have two sub-cases:

\textit{Case I.a.} $c_3=0$. In this case, we have $b=0$. Thus, $\hat M$ is flat which yields that $\left(U,V\right)$ satisfies for \eqref{L31fxIFlatNBPropRemEq1} for some constants $c_1\neq0,c_2$.

When $a\neq0$, by \Cref{FlatMinimalB-scrolls}, $\hat M$ is congruent to the B-scroll parametrized by \eqref{E31-FlatBScrollParam}.  By combining \eqref{E31-FlatBScrollParam} with \eqref{L31fxIFlatNBPropRemEq1} we get \eqref{E31fxIPseuUmbPropCaseIParam}. So, we have case (i) of the theorem.

On the other hand, if $a=0$, then \eqref{L31cB-ScrollShape} implies that $A_{e_4}=0$ and (4) of \Cref{SubsectTisLLLemma1} implies $\nabla^\perp e_4=0$.  Therefore,  $\mathcal N_p$ lies on the totally geodesic hypersurface $\mathcal M\times I$ of $\mathbb{E}^{3}_1(c)\,{}_f\times I$, where $\mathcal M$ is the plane in $\mathbb E^3_1$ with the normal $\bar{e_4}$. However, this is a contradiction.

\textit{Case I.b.} $c_3\neq 0$. In this case, \eqref{L31cParamAsatisfies} and \eqref{L31cPseudoUmbLemma1Eq1b} gives the second equation in \eqref{E31fxIPseuUmbPropCaseIICond1} and \eqref{E31fxIPseuUmbPropCaseIICond2}, respectively. So, we have case (ii) of the theorem.

\textit{Case II.} $\mathcal N_p$ is not class $A$, i.e., $h_3\neq0$. In this case, \eqref{SectClassACorr2Eq1} implies 
\begin{eqnarray}\label{E31fxIPseuUmbPropCaseIIIProofEq0}
h_2=\frac{f^{\prime}}{f}-\frac{E_u}{E}&=&0.
\end{eqnarray}
Therefore, $\hat M$ is flat, we have $E=f$ and \eqref{L31fxIFlatNBPropCaseEqIIMetric}. Moreover, from \Cref{SectClassALem1}  we have 
\begin{eqnarray}
\label{E31fxIPseuUmbPropCaseIIIProofEq1a}\tilde \phi_{uu} &=& -\frac{f^{\prime}}{f}\tilde{\phi}_{u}+h_{1}\tilde{N},\\
\label{E31fxIPseuUmbPropCaseIIIProofEq1b}\tilde \phi_{uv} &=& 0
\end{eqnarray}
and \eqref{L31fxIFlatNBPropCaseEqIIMetric} implies
\begin{equation}
\label{E31fxIPseuUmbPropCaseIIIProofEq1d} N_{u} = -h_{1}f\tilde\phi_v,\qquad N_{v} = -h_{3}(f\tilde\phi_u+\tilde\phi_v).
\end{equation}
Therefore, $\hat M$ is flat and because of \eqref{E31fxIPseuUmbPropCaseIIIProofEq1d},  the Gauss equation \eqref{GaussEq} implies $h_1h_3=0$ which implies $h_1=0$. Moreover, the Codazzi equation \eqref{Codazzi} for $X=Z=\partial_u$, $Y=\partial_v$ gives $h_1=h_1(u)$. Thus, by solving \eqref{E31fxIPseuUmbPropCaseIIIProofEq1a} and \eqref{E31fxIPseuUmbPropCaseIIIProofEq1b}, we obtain
\begin{eqnarray}
\label{E31fxIPseuUmbPropCaseIIIProofEq2}
\tilde\phi(u,v)&=&F(u)v_1+\phi_2(v)
\end{eqnarray}
for a constant vector $v_1$ and a $\mathbb E^3_1$-valued smooth function $\phi_2$. \eqref{L31fxIFlatNBPropCaseEqIIMetric} and \eqref{E31fxIPseuUmbPropCaseIIIProofEq2} imply 
$$g_0(v_1,v_1)=-1,\ g_0(v_1,\phi_2')=1,\ g_0(\phi_2',\phi_2')=0.$$
 Therefore up to a suitable isometry of $\mathbb E^3_1$, one can choose 
\begin{eqnarray}
\label{E31fxIPseuUmbPropCaseIIIProofEq3}
v_1=(1,0,0),\qquad \phi_2(v)=(-v,b_1(v),b_2(v)),\ b_1'{}^2+b_2'{}^2=1.
\end{eqnarray}
By combining \eqref{E31fxIPseuUmbPropCaseIIIProofEq2} and \eqref{E31fxIPseuUmbPropCaseIIIProofEq3} we obtain \eqref{E31fxIPseuUmbPropCaseIII} which gives case (iii) of the proposition. Hence, the proof of necessary condition completed. 

Conversely, a direct computation yields that if $M$ is a surface given by case (i) or case (iii) of the theorem, then the shape operator $A_H$ of $M$ along $H$ takes the form
$$A_H=\left(\frac{f'}f\right)^2I$$
and for the surface $M$ we obtain
$$A_H=\left(c_3^2+\frac{f'(u)^2}{f(u)^2}\right)I.$$
Hence, all of the surfaces given in the proposition are pseudo-umbilical.
\end{proof}


In the following proposition, we consider pseudo-umbilical surfaces in $\mathbb{S}^{3}_1(c)\,{}_f\times I$.
\begin{proposition}
Let $M$ be a surface in the static space-time $\mathbb{S}^{3}_1(c)\,{}_f\times I$ with light-like $\left(\frac{\partial}{\partial z}\right)^T$. Then $M$ is pseudo-umbilical if and only if it is locally congruent to one of the following surfaces:
\begin{enumerate}
\item[(i)] $M$ is a class~$\mathcal A$  surface given in \Cref{SubsectTisLLProp1L31c} for some $a,b,U,V$ satisfying
\begin{eqnarray}
\label{S31fxIPseuUmbPropCaseIICond1} b(U)&=&\sqrt{c_3 f^2-1}, \qquad V_u=\frac{c_3 f^2}{2}  V^2 U'+\frac{1}{2 f^2 U'}\\
\label{S31fxIPseuUmbPropCaseIICond2} a(U)&=&\frac{U' \left(-c_3 f^2+f'{}^2+1\right)+f f' U''}{f^2 \sqrt{c_3 f^2-1} \left(U'\right)^3}
\end{eqnarray}
for a non-zero constant $c_3$.
\item[(ii)] A flat surface  generated by the flat isoparametric surface $\mathbb S^1_1\left(\frac1{\cos{\theta}}\right)\times \mathbb S^1\left(\frac1{\sin{\theta}}\right)\subset\mathbb S^3_1$ which can be parametrized by 
\begin{align}
\begin{split}
\label{S31fxIPseuUmbPropCaseII} \phi(u,v)=&\left(-\cos \theta  \sinh \left(-\frac{c_2 \sec \theta }{\sqrt{2}}+F(u) (-\cos \theta ) \sqrt{\sec (2 \theta )}+\frac{v \sec \theta }{\sqrt{\sec (2 \theta )}}\right),\right.\\&\left.
\cos \theta  \cosh \left(-\frac{c_2 \sec \theta }{\sqrt{2}}+F(u) (-\cos \theta ) \sqrt{\sec (2 \theta )}+\frac{v \sec \theta }{\sqrt{\sec (2 \theta )}}\right),\right.\\&\left.
\sin \theta  \cos \left(-\frac{c_2 \csc \theta }{\sqrt{2}}+F(u) \sin \theta  \sqrt{\sec (2 \theta )}+\frac{v \csc \theta }{\sqrt{\sec (2 \theta )}}\right),\right.\\&\left.
-\sin \theta  \sin \left(-\frac{c_2 \csc \theta }{\sqrt{2}}+F(u) \sin \theta  \sqrt{\sec (2 \theta )}+\frac{v \csc \theta }{\sqrt{\sec (2 \theta )}}\right),u\right)
\end{split}
\end{align}
for some $\theta\in(0,\pi/4)$ and $c_2\in\mathbb R$.

\end{enumerate}
\end{proposition}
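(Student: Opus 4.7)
The plan is to follow the template of the proof of \Cref{E31fxIPseuUmbProp}, adapting each step to the ambient curvature $c=1$. Given $p\in M$, since $\left(\frac{\partial}{\partial z}\right)^T$ is light-like on $M$, \Cref{SubsectTisLLThm1} furnishes a neighborhood $\mathcal N_p$ of $p$ parametrized by \eqref{SubsectTisLLProp1Eq1} for some time-like surface $\hat M\subset \mathbb{S}^3_1$ whose induced metric is of the form \eqref{SubsectTisLLProp1Eq2}. Under the pseudo-umbilicity hypothesis, \Cref{SectClassACorr2} splits the analysis into two cases according to whether $\mathcal N_p$ is class~$\mathcal A$ (equivalently $h_3\equiv 0$) or not.

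In the class~$\mathcal A$ case, \Cref{L31cPseudoUmbLemma1} shows that $\mathcal N_p$ is congruent to the null-scroll generated surface of \Cref{SubsectTisLLProp1L31c} for some $a,b,U,V$ satisfying \eqref{L31cParamAsatisfies} and \eqref{L31cPseudoUmbLemma1Eq1ALL}. Setting $c=1$ in \eqref{L31cPseudoUmbLemma1Eq1a} gives $U'\, b(U)\, b'(U)=f'(b(U)^2+1)/f$, which rewrites as $\frac{d}{du}\ln(b(U)^2+1)=\frac{d}{du}\ln f^2$ and integrates to $b(U)^2+1 = c_3 f^2$ for a positive constant $c_3$, giving the first identity of \eqref{S31fxIPseuUmbPropCaseIICond1}. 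Back-substituting into \eqref{L31cParamAsatisfies} yields the second identity in \eqref{S31fxIPseuUmbPropCaseIICond1}, and substituting into \eqref{L31cPseudoUmbLemma1Eq1b} and solving for $a(U)$ produces \eqref{S31fxIPseuUmbPropCaseIICond2}; this is case~(i).

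For the non-class~$\mathcal A$ case, the second equation of \eqref{SectClassACorr2Eq1} combined with $h_3\neq 0$ forces $h_2=0$ and $E_u/E=f'/f$, so after rescaling $v$ we may take $E=f$ and the metric of $\hat M$ becomes \eqref{L31fxIFlatNBPropCaseEqIIMetric}. A direct Christoffel computation shows $\hat M$ is intrinsically flat, so the Gauss equation for $\hat M\subset \mathbb{S}^3_1$ (with $h_2=0$) gives $h_1 h_3 = 1/f^2$, and the Codazzi equations yield $h_1=h_1(u)$ and $h_3=h_3(v)$. Separating variables in $h_1(u)h_3(v)=1/f(u)^2$ then forces $h_3\equiv\mu$ for a nonzero constant $\mu$ and $h_1=1/(\mu f^2)$. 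The Gauss formulas in $\mathbb{E}^4_1$ now reduce to a closed linear system for $\tilde\phi$ whose shape operator has two distinct constant principal curvatures $(\mu\pm\sqrt{\mu^2+4})/2$; hence $\hat M$ is locally congruent to a Lorentzian Clifford-type product $\mathbb{S}^1_1(1/\cos\theta)\times \mathbb{S}^1(1/\sin\theta)\subset \mathbb{S}^3_1$ for the $\theta\in(0,\pi/4)$ determined by $\mu$, and choosing null coordinates from \Cref{L31fxIFlatNBPropRem} compatible with the product structure, followed by a suitable ambient isometry, yields the explicit parametrization \eqref{S31fxIPseuUmbPropCaseII}; this is case~(ii). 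The sufficient direction in each case is a direct computation from \eqref{SectClassAshapeoperators_new} showing that $A_H$ is a scalar multiple of the identity.

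The main obstacle will be the identification step in the non-class~$\mathcal A$ case: verifying that the Gauss--Codazzi constraints really do isolate the flat Clifford-type surface up to isometry, and then fixing the precise correspondence between $\mu$ and $\theta\in(0,\pi/4)$ so that an ambient isometry converts the standard product parametrization into the exact form \eqref{S31fxIPseuUmbPropCaseII}.
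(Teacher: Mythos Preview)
Your proposal is correct and follows essentially the same route as the paper: split via \Cref{SectClassACorr2}, handle the class-$\mathcal A$ branch by \Cref{L31cPseudoUmbLemma1} plus integrating \eqref{L31cPseudoUmbLemma1Eq1a}, and in the non-class-$\mathcal A$ branch use $h_2=0$, $E=f$, and Gauss--Codazzi to force $h_3$ constant and identify $\hat M$ as the flat isoparametric Clifford-type product via \cite{LiWang2005}. For the identification step you flag as the main obstacle, the paper proceeds exactly as you outline---it compares the shape operator of $\hat M$ in the $(u,v)$ frame with that of the standard product parametrization in its null $(U,V)$ frame, then reads off $c_1=\sqrt{\sec(2\theta)/2}$ and $k_1=\cot\theta-\tan\theta$ via \Cref{L31fxIFlatNBPropRem}.
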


\begin{proof}
Let $p \in M$. Similar to the proof of \Cref{E31fxIPseuUmbProp}, we consider the local parametrization of $M$ on a neighborhood $\mathcal N_p$ of $p$ given in \Cref{SubsectTisLLProp1} for a Lorentzian surface  $\hat M$ and assume that  $\mathcal N_p$ is pseudo-umbilical. We study two cases obtained from \Cref{SectClassACorr2}:

\textit{Case I.} $\mathcal N_p$ is class $A$. Because of \Cref{L31cPseudoUmbLemma1}, $\hat M$ is a null scroll parametrized by \eqref{L31c-NullScrollParam} and  $\mathcal N_p$  is the surface constructed in
\Cref{SubsectTisLLProp1L31c}, for some $a,b,U,V$ satisfying \eqref{L31cParamAsatisfies}. By solving \eqref{L31cPseudoUmbLemma1Eq1a} for $c=1$, we see that $b$ satisfies the first equation in \eqref{S31fxIPseuUmbPropCaseIICond1} for a constant $c_3$ and $\varepsilon=\pm1$. By re-defining $V$ properly on \eqref{L31c-NullScrollParam}, one can assume $\varepsilon=1$. So, we have  the first equation in \eqref{S31fxIPseuUmbPropCaseIICond1}. By a further computation using \eqref{L31cParamAsatisfies} and \eqref{L31cPseudoUmbLemma1Eq1b}, we also get the second equation in \eqref{S31fxIPseuUmbPropCaseIICond1} and \eqref{S31fxIPseuUmbPropCaseIICond2}, respectively. So, we have case (i) of the theorem.

\textit{Case II.} $\mathcal N_p$ is not class $A$, i.e., $h_3\neq0$. In this case, similar to the proof of \Cref{E31fxIPseuUmbProp}, we have \eqref{E31fxIPseuUmbPropCaseIIIProofEq0} from which we observe that  $\hat M$ is flat and the equations \eqref{L31fxIFlatNBPropCaseEqIIMetric}, \eqref{E31fxIPseuUmbPropCaseIIIProofEq1d} are satisfied for some functions $h_1,h_3$. 

By taking into account \eqref{SectClassALem1Eq1} in \Cref{SectClassALem1} and \eqref{E31fxIPseuUmbPropCaseIIIProofEq1d}, we use the Codazzi and Gauss equations \eqref{GaussEq} and \eqref{Codazzi} to obtain $h_1=\frac{1}{k_1f^2},\ h_3=k_1$. Thus, because of  \eqref{E31fxIPseuUmbPropCaseIIIProofEq1d}, the shape operator $\hat S$ of $\hat M$ takes the form
\begin{equation}\label{S31fxIPseuUmbPropEq1}\hat S=\begin{pmatrix}
 0 & k_1 f \\
 -\frac{1}{2 fk_1} & k_1 \\
\end{pmatrix}\qquad\mbox{with respect to $\{\partial_u,\partial_v\}$.}
\end{equation}
for a constant $k_1\neq 0$. Therefore, $\hat M$ is a isoparametric surface with distinct real principal curvatures which yields that $\hat M$ is an open part of $\mathbb S^1_1\left(\frac1{\cos{\theta}}\right)\times \mathbb S^1\left(\frac1{\sin{\theta}}\right)$,  \cite{LiWang2005}. So, up to a suitable isometry of $\mathbb S^3_1$, we can assume that  $\hat M$ can be parametrized by 
\begin{align}\label{S11rxSRParam}
\begin{split}
\hat\psi(U,V)=&\left(\cos \theta  \sinh \frac{U+V}{\sqrt{2}\cos \theta},\cos \theta  \cosh \frac{U+V}{\sqrt{2}\cos \theta},\sin \theta  \cos \frac{U-V}{\sqrt{2}\sin\theta},\right.\\&\left.\sin \theta  \sin \frac{U-V}{\sqrt{2}\sin\theta}\right),
\end{split}
\end{align}
where $\theta\in(0,\pi/2)$ is defined by $k_1=\cot \theta-\tan \theta$.

Note that the shape operator $\hat S$ of $\hat M$ is
\begin{equation}\label{S31fxIPseuUmbPropEq2}
\hat S=\begin{pmatrix}
 \frac{1}{2} (\cot (\theta )-\tan (\theta )) & -\csc (2 \theta ) \\
 -\csc (2 \theta ) & \frac{1}{2} (\cot (\theta )-\tan (\theta )) \\
\end{pmatrix}\qquad\mbox{with respect to $\{\partial_U,\partial_V\}$.}
\end{equation}
 Moreover, by \Cref{L31fxIFlatNBPropRem}, the coordinate systems $(u, v)$ and $\left(U,V\right)$ are related by \eqref{L31fxIFlatNBPropRemEq1} for some $c_1\neq 0,\ c_2 $. By a direct computation considering \eqref{L31fxIFlatNBPropRemEq1}, \eqref{S31fxIPseuUmbPropEq1} and \eqref{S31fxIPseuUmbPropEq2}, we obtain $\theta\in(0,\pi/4)$ and 
\begin{equation}\label{S31fxIPseuUmbPropEq3}
c_1= \sqrt{\frac{\sec (2 \theta )}{2}}.
\end{equation}

By combining  \eqref{S11rxSRParam} with \eqref{L31fxIFlatNBPropRemEq1} and  \eqref{S31fxIPseuUmbPropEq3}, we obtain \eqref{S31fxIPseuUmbPropCaseII} which give the case (ii) of the proposition. Hence, the proof of the necessary condition is completed. The proof of the sufficient condition follows from a direct computation.
\end{proof}

\subsection{Totally-Umbilical Surfaces}
In this subsection, we consider the  totally-umbilical surfaces.

Let $M$ be a  totally-umbilical surface in $\mathbb L^3_1(c)\,{}_f\times I$ with light-like $\left(\frac{\partial}{\partial z}\right)^T$. Then, $M$ is also  pseudo-umbilical. Furthermore, \Cref{SectClassACorr1} and \Cref{SectClassACorr3} implies that it is class~$\mathcal A$. Therefore, $M$ is locally congruent to a surface  given in  \Cref{SubsectTisLLProp1L31c} for some smooth functions $a,b,U,V,E$ satisfying  \eqref{L31cParamAsatisfies} and \eqref{L31cParamEDef}. Then, from \eqref{L31cB-ScrollShape} and \eqref{SectClassACorr3Eq1} imply that have 
\begin{align}\label{TotallyUmbEq1}
\begin{split}
V U' \left(b(U)^2+c\right)+\frac{f'}{f}+\frac{U''}{U'}=&0, \\
f^2 U'{}^2 \left(a(U)+V b'(U)\right)+b(U)=&0.
\end{split}
\end{align}
As described in proof of \Cref{L31cPseudoUmbLemma1}, we have $V_v\neq0$ and $U'\neq0$. Therefore, \eqref{TotallyUmbEq1} gives
\begin{subequations}\label{TotallyUmbEq2ALL}
\begin{eqnarray}
\label{TotallyUmbEq2a} \left(b(U)^2+c\right)=&0,, \\
\label{TotallyUmbEq2b} \frac{f'}{f}+\frac{U''}{U'}=&0, \\
\label{TotallyUmbEq2c} f^2 U'{}^2 a(U)+b(U)=&0.
\end{eqnarray}
and \eqref{L31cParamAsatisfies}, together with \eqref{TotallyUmbEq2a}, implies
\begin{equation}\label{TotallyUmbEq2d}
2f^2U' V_u=1.
\end{equation}
\end{subequations}

We are ready to prove the following proposition:
\begin{proposition}
Let $M$ be a surface in the static space-time $\mathbb L^{3}_1(c)\,{}_f\times I$ with light-like $\left(\frac{\partial}{\partial z}\right)^T$. Then, we have the followings:
\begin{enumerate}
\item[(i)] If $c=0$ or $c=1$, then $M$ can not be totally umbilical.

\item[(ii)] $M$ is a totally umbilical surface of $\mathbb{H}^{3}_1(c)\,{}_f\times I$ if and only if it is congruent to the flat surface given by 
\begin{align}\label{H31fxITotUmbPropEqCase-ii} 
\begin{split}
\phi(u,v)=& \left(\frac{\left(2 k v+k_2\right) \cos \left(F(u)+\frac{k_2}{k}\right)-2 k \sin \left(F(u)+\frac{k_2}{k}\right)}{2 k},\right.\\&\left.
\frac{-k \left(2 k^2+1\right) \cos \left(F(u)+\frac{k_2}{k}\right)-\left(2 k v+k_2\right) \sin \left(F(u)+\frac{k_2}{k}\right)}{2 \sqrt{2} k^2},\right.\\&\left.
\frac{k \left(2 k^2-1\right) \cos \left(F(u)+\frac{k_2}{k}\right)-\left(2 k v+k_2\right) \sin \left(F(u)+\frac{k_2}{k}\right)}{2 \sqrt{2} k^2},\right.\\&\left.
\frac{\left(2 k v+k_2\right) \cos \left(F(u)+\frac{k_2}{k}\right)}{2 k},u\right)
\end{split}
\end{align}

\end{enumerate}
\end{proposition}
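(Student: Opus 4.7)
The plan is to invoke the reduction already carried out in the paragraphs immediately preceding the proposition. By \Cref{SectClassACorr1} together with \Cref{SectClassACorr3}, every totally umbilical surface with light-like $\left(\frac{\partial}{\partial z}\right)^T$ is automatically of class $\mathcal A$, so \Cref{L31cfI-ClassA-MainThm} combined with the vanishing of the off-diagonal entries of both shape operators confines $M$ to the null-scroll case \Cref{SubsectTisLLProp1L31c}, with data $(a,b,U,V)$ satisfying the reduced system \eqref{L31cParamAsatisfies} together with \eqref{TotallyUmbEq2ALL}. Everything that follows is a case analysis on $c$ driven by \eqref{TotallyUmbEq2a}.

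For part (i), equation \eqref{TotallyUmbEq2a} reads $b(U)^2 = -c$. If $c=1$, there is no real solution for $b$, so no such totally umbilical surface exists. If $c=0$, then $b \equiv 0$, and since $fU' \neq 0$ equation \eqref{TotallyUmbEq2c} forces $a \equiv 0$. But then \eqref{L31c-NullScrollParamShapOp} shows that the shape operator of $\hat M$ vanishes, which via \eqref{L31cB-ScrollShape} yields $A_{e_4} = 0$ on $M$; combined with (4) of \Cref{SubsectTisLLLemma1}, which then gives $\nabla^{\perp} e_4 = 0$, this makes $e_4$ parallel in the ambient space along $M$. Consequently $M$ lies in a totally geodesic hypersurface of $\mathbb E^3_1\,{}_f\times I$, contradicting the standing assumption from the Remark in Section~2. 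This is precisely the contradiction obtained in Case~I.a of the proof of \Cref{E31fxIPseuUmbProp}.

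For part (ii), $c=-1$ forces $b^2 \equiv 1$, and replacing $V$ by $-V$ if necessary we may take $b \equiv 1$. Equation \eqref{TotallyUmbEq2b} is then $(fU')' = 0$, so $fU' = k$ for a constant $k \neq 0$, and integrating using $F'=1/f$ gives $U(u) = kF(u) + k_2$ for a constant $k_2$. Substituting into \eqref{TotallyUmbEq2c} yields $a \equiv -1/k^2$, a constant in $U$. Hence $\hat M$ is precisely the B-scroll in $\mathbb H^3_1$ treated in \Cref{FlatNullScrollH31-Lemma}, so it is congruent to the surface parametrized by \eqref{FlatNullScrollH31-LemmaEq1}. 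Finally \eqref{TotallyUmbEq2d} becomes $V_u = 1/(2kf)$, which integrates to $V(u,v) = F(u)/(2k) + g(v)$; non-degeneracy of the parametrization forces $g'(v) \neq 0$, and a reparametrization of $v$ allows us to normalize $g(v) = -v/k$, i.e.\ $V = F/(2k) - v/k$.

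The remaining step, and the main bookkeeping obstacle, is to insert $U(u) = kF(u) + k_2$ and $V(u,v) = F(u)/(2k) - v/k$ into the B-scroll formula \eqref{FlatNullScrollH31-LemmaEq1} and then lift through \Cref{SubsectTisLLProp1} to $\phi(u,v) = (\tilde\phi(u,v),u)$. Under these substitutions the identities $U/k = F(u) + k_2/k$ and $U - 2k^2 V = 2kv + k_2$ hold, so the four spatial components of \eqref{FlatNullScrollH31-LemmaEq1} transform directly into the spatial components of \eqref{H31fxITotUmbPropEqCase-ii}; flatness of the resulting surface is inherited from the B-scroll by the converse part of \Cref{FlatNullScrollH31-Lemma}. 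The converse direction of (ii) is a direct verification: starting from \eqref{H31fxITotUmbPropEqCase-ii} one checks that $\left(\frac{\partial}{\partial z}\right)^T$ is light-like, and then \eqref{SectClassAshapeoperators_new} applied to the underlying B-scroll shows that both shape operators are scalar multiples of the identity, establishing total umbilicity.
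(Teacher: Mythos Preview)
Your proof is correct and follows essentially the same route as the paper's: reduce to the null-scroll case via \Cref{SectClassACorr1}, \Cref{SectClassACorr3}, and \Cref{L31cfI-ClassA-MainThm}, then analyze \eqref{TotallyUmbEq2ALL} case by case in $c$, and finally substitute into \Cref{FlatNullScrollH31-Lemma}. Your treatment of $c=0$ in part~(i) is in fact more explicit than the paper's (which simply cites \eqref{TotallyUmbEq2a} and \eqref{TotallyUmbEq2c}), and your direct integration of \eqref{TotallyUmbEq2b}--\eqref{TotallyUmbEq2d} replaces the paper's appeal to \Cref{L31fxIFlatNBPropRem}, but the outcome is identical. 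One minor imprecision: the normalization $b\equiv 1$ from $b^2=1$ is achieved by replacing $C$ by $-C$ (and $a$ by $-a$) in the Cartan frame rather than by ``replacing $V$ by $-V$''; the surface is unchanged either way.
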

\begin{proof}
(i) follows from \eqref{TotallyUmbEq2a} and \eqref{TotallyUmbEq2c}. First, we are going to prove the necessary condition of (ii).

 Assume that $c=-1$ and $M$ is a totally umblical  surface of  $\mathbb{H}^{3}_1(c)\,{}_f\times I$, i.e., \eqref{TotallyUmbEq2ALL} is satisfied. In this case, as described above,  $M$ is locally congruent to a surface  given in  \Cref{SubsectTisLLProp1L31c} for some smooth functions $a,b,U,V,E$ satisfying  \eqref{L31cParamAsatisfies} and \eqref{L31cParamEDef}. Note that the equation \eqref{TotallyUmbEq2a} implies $b=1$ which yields that $\hat M$ is flat.

Next, by combining \eqref{L31fxIFlatNBPropRemEq1} given in \Cref{L31fxIFlatNBPropRem} with \eqref{TotallyUmbEq2b}-\eqref{TotallyUmbEq2d} we get
\begin{align}\label{H31fxITotUmbPropEqProof1} 
\begin{split}
a=-\frac 1k^2,\quad U=k F(u)+k_2, \quad V=\frac{F(u)-2 v}{2 k}
\end{split}
\end{align}
for some constants $k$ and $k_2$. Furthermore, since  $b=1$, the first equation in \eqref{H31fxITotUmbPropEqProof1} and  \Cref{FlatNullScrollH31-Lemma}
imply that  $\hat M$ is congruent to \eqref{FlatNullScrollH31-LemmaEq1}. By combining \eqref{FlatNullScrollH31-LemmaEq1} and \eqref{H31fxITotUmbPropEqProof1}, we get \eqref{H31fxITotUmbPropEqCase-ii}. Hence the proof is completed.

Conversely, let $M$ be a surface given by \eqref{H31fxITotUmbPropEqCase-ii}. By a direct computation, we obtain
$$A_{e_3}=-\frac{f'}{f}I,\qquad A_{e_4}= \frac 1fI$$
which yields that $M$ is totally umbilical.
\end{proof}


\section*{Acknowledgements}
This work forms a part of the first-named author’s PhD thesis and was carried out within the scope of a project supported by T\"UB\.ITAK, the Scientific and Technological Research Council of T\"urkiye (Project number 121F352).

\section*{Declarations}

\textbf{Use of LLMs.} During the preparation of this work, the authors used ChatGPT solely for grammatical checking.

\textbf{Data Availability.} Data sharing not applicable to this article because no datasets were generated or analysed during the current study.

\textbf{Code availability.} N/A.

\textbf{Conflicts of interest.} The authors declare that they have no conflict of interest.

\bibliographystyle{plain}
\bibliography{EJPAM_Sample}

\end{document}